\newtheorem{theorem}{Theorem}
\newtheorem{propos}[theorem]{Proposition}
\newtheorem{corollary}[theorem]{Corollary}
\theoremstyle{definition}
\theoremstyle{remark}
\newcommand{\R}{{\mathbb{R}}}
\newcommand{\C}{{\mathbb{C}}}
\newcommand{\Z}{{\mathbb{Z}}}
\newcommand{\CC}{{\mathcal{C}}}
\newcommand{\CE}{{\mathcal{E}}}
\newcommand{\CF}{{\mathcal{F}}}
\newcommand{\CM}{{\mathcal{M}}}
\newcommand{\CQ}{{\mathcal{Q}}}
\newcommand{\CR}{{\mathcal{R}}}
\newcommand{\cv}{{\mathfrak{v}}}
\newcommand{\cu}{{\mathfrak{u}}}
\newcommand{\und}{\underline}
\newcommand{\rlbicross}{{\triangleright\!\!\!\blacktriangleleft}}
\renewcommand{\ker}{{\rm{ker}}}
\newcommand{\trace}{{\rm trace}}
\newcommand{\<}{{\langle}}
\renewcommand{\>}{{\rangle}}
\newcommand{\ev}{{\rm ev}}
\newcommand{\coev}{{\rm coev}}
\newcommand{\Ad}{{\rm Ad}}
\newcommand{\tens}{\otimes}
\newcommand{\id}{{\rm id}}
\newcommand{\bo}{{}^{(1)}}
\newcommand{\bt}{{}^{(2)}}
\renewcommand{\o}{{}_{(1)}}
\renewcommand{\t}{{}_{(2)}}
\renewcommand{\th}{{}_{(3)}}
\newcommand{\fo}{{}_{(4)}}
\newcommand{\fiv}{{}_{(5)}}
\newcommand{\six}{{}_{(6)}}
\newcommand{\extd}{{\rm d}}
\newcommand{\del}{{\partial}}
\newcommand{\eps}{\epsilon}
\newcommand{\tr}{{\rm Tr}}
\newcommand{\ra}{{\triangleleft}}
\begin{document}

\title{$q$-Fuzzy spheres and quantum differentials on $B_q[SU_2]$ and $U_q(su_2)$}
\keywords{Podles, fuzzy, quantum sphere, quantum group,  K-theory, transmutation, braided group, Drinfeld twist, differential algebra, bicrossproduct, quantum gravity}
\subjclass[2000]{Primary 81R50, 16W50, 16S36}

\author{Shahn Majid}
\address{Queen Mary University of London\\
School of Mathematics, Mile End Rd, London E1 4NS, UK}

\email{s.majid@qmul.ac.uk}

\date{29th December 2008 -- revised March 2010}

\begin{abstract} Whereas the classical sphere $\C P^1$ can be defined as the coordinate algebra generated by the matrix entries of a projector $e$ with $\trace(e)=1$, the fuzzy-sphere is defined in the same way by $\trace(e)=1+\lambda$. We show that the standard $q$-sphere is similarly defined by $\trace_q(e)=1$ and the Podle\'s 2-spheres by $\trace_q(e)=1+\lambda$, thereby giving a unified point of view in which the 2-parameter Podle\'s spheres are  $q$-fuzzy spheres.  We show further that  they arise geometrically as `constant time slices' of the unit hyperboloid in $q$-Minkowski space viewed as the braided group $B_q[SU_2]$. Their localisations are then isomorphic to quotients  of $U_q(su_2)$ at fixed values of the $q$-Casimir precisely  $q$-deforming the fuzzy case. We use transmutation and twisting theory to introduce a   $C_q[G_\C]$-covariant calculus on general  $B_q[G]$ and $U_q(g)$, and use $\Omega(B_q[SU_2])$ to provide a unified point of view on the 3D calculi on fuzzy and Podle\'s spheres. To complete the picture we show how the covariant calculus on the 3D bicrossproduct spacetime arises from $\Omega(C_q[SU_2])$ prior to twisting. \end{abstract}
\maketitle 

\section{Introduction}

$q$-deformed geometries were extensively studied in the late 1980s and early 1990s but have recently acquired a new lease of life as effective theories in quantum gravity. In (Euclidean) 3D quantum gravity with cosmological constant  $C_q[SU_2]$ appears as the coordinate algebra of the `frame rotations quantum group' and $U_q(su_2)$ appears as the `model quantum spacetime' coordinate algebra. We refer to \cite{MaSch} for recent work and the physics background and references. In the present paper we address two fundamental problems outstanding from the previous era.

The first, which is our starting point concerns the mysterious role of non-standard quantum spheres.  In \cite{Pod}, Podle\'s classified  $C^*$-algebras that could reasonably be viewed as deformed `spheres' and which were covariant under the quantum group $C_q[SU_2]$. He found a 2-parameter worth of algebras which can be presented as having generators $x,z,z^*$ and relations 
\begin{equation}\label{podles}
zx=q^2xz,\quad zz^*=(s^2+q^2x)(1-q^2x),\quad z^* z=(s^2+x)(1-x)\end{equation}
where $q^2\ne 0$ and $s^2$ are the real parameters.  Whereas the case $s=0$ is well-studied as the standard $q$-sphere that arises naturally in quantum group methods as the $U(1)$-invariant subalgebra of 
$C_q[SU_2]$ (see \cite{Ma:rieq} for a recent study), the case of nonzero $s$ has remained enigmatic as to how it should be fully understood in $q$-geometry. We do not attempt to list all the literature on these nonstandard `Podle\'s spheres' but we note for example connections to $q$-special functions\cite{NM} and an algebraic construction in terms of coideals of twisted primitives, see eg. \cite{DK}. 

We will show how non-standard Podle\'s spheres arise as $q$-deformed fuzzy spheres essentially as quotients of $U_q(su_2)$ by setting the $q$-casimir to a constant. This $q$-deforms the role of $U(su_2)$ as quantisation of $su_2^*$ (`fuzzy $\R^3$') and the normal `fuzzy sphere' as a quotient of it quantizing a coadjoint orbit\cite{BatMa:non}.  Note that the term `fuzzy sphere' is also used more narrowly in the physics literature for finite-dimensional matrix algebras (these arise at certain discrete radii). Similarly when $s^2=-q^{2n}$ for a positive integer $n$, the Podle\'s sphere is a finite-dimensional matrix algebra and this has been called `$q$-fuzzy sphere', e.g. \cite{GMS}. Our results should not be confused with such notations but are broadly compatible with them. Also note that in the current physical picture of 3D quantum gravity the `fuzzy' aspect comes from quantum gravity while the further $q$-deformation is the introduction of a cosmological constant. From a mathematical point of view it is also interesting that the Podle\'s spheres are {\em both} subalgebras of $C_q[SU_2]$ and, essentially, quotients of its dual, allowing them in different limits to interpolate between the classical sphere and the fuzzy sphere. Our results, in Section~3, come from a unified approach to quantum spheres in Section~2 provided by a systematic `braided trace' construction. 

The second problem is the quantum differential geometry of algebras like $U_q(su_2)$ viewed `up-side-down' as noncommutative spaces. We provide a natural calculus here in Section~4 which $q$-deforms the quantum differential calculus on $U(su_2)$  previously introduced in \cite{BatMa:non}. It induces a natural quantum differential calculus on the Podle\'s or $q$-fuzzy sphere $q$-deforming the calculus on the usual fuzzy sphere. The $q$-geometry of interest here is more precisely the braided group $B_q[SU_2]$. This is the 3D unit $q$-hyperboloid in $q$-Minkowksi space  and obtained from $C_q[SU_2]$ by a covariantisation process of transmutation, see \cite[Chapter 10]{Ma:book}. As an algebra and for generic $q$, $U_q(su_2)$ is a localisation of $B_q[SU_2]$ but it is the latter which appears more natural in the noncommutative geometry.  The non-standard Podle\'s sphere appears as a `constant time' slice of this $q$-hyperboloid. Moreover, we use braided methods to provide a natural differential graded algebra  $\Omega(B_q[SU_2])$ by transmutation of the standard 4D calculus on $C_q[SU_2]$. The general Hopf algebra theory behind this is in the Appendix and provides a natural quantum differential calculus on all $B_q[G]$ associated to semisimple Lie algebras, and hence on $U_q(g)$ as their localisations. The calculus is constructed for the transmutation of any coquasitriangular Hopf algebra and can also be understood as a comodule algebra twist. As a result the calculus is in fact covariant under the complexification $C_q[G_\C]=C_q[G]\bowtie_\CR C_q[G]$. The twist result motivates a parallel view of the calculus on $C_q[SU_2]$ as a deformation of the calculus on the 3D bicrossproduct spacetime, a picture completed in Section~5.

\subsubsection*{Acknowledgements} The bicrossproduct  result in Section 5 was presented at the ICMS conference on noncommutative deformations of special relativity,  Edinburgh, 2008, and the twisting result in the Appendix at the QGQG conference, Corfu 2009.

\section{Uniform projector construction of quantum spheres}

{\bf 2.1} We first recall the `quantum logic' point of view on $\C P^1$ used in \cite{BraMa:qua}. Thus, specifying a line in $\C^2$ is the same thing as specifying a matrix $e$ of size $2\times 2$ and obeying 
\begin{equation}\label{projrel} e^2=e,\quad e^\dagger=e\end{equation}
\begin{equation}\label{tr1}\trace(e)=1.\end{equation}
As the matrix is hermitian its eigenvalues are real. As it is a projector its eigenvalues are 0,1 and as
trace is 1, its image is a 1-dimensional subspace of $\C^2$ as the eigenspace of eigenvalue 1. We now use such matrices to `coordinatise' $\C P^1$, i.e. we regard its entries as generators of the coordinate $*$-algebra $A$ and (\ref{projrel})-(\ref{tr1}) as its defining relations.

It is an easy exercise to write $e=\begin{pmatrix} 1-a & b \\ b^* & a\end{pmatrix}$ for self-adjoint generator $a$ and complex generator $b$. The form of $e$ solves the trace and hermitian conditions and the remaining projector relations become
\[ ba=ab,\quad bb^*=b^* b=a(1-a)\]
which indeed describes a sphere of radius $1/2$ if we write $b=-x_1+\imath x_2$ and $a=x_3+{1\over 2}$ (then the last relation here is $\sum_i x_i^2={1\over 4}$.) If $\sigma_i$ are the usual Pauli matrices then $e={1\over 2}-\sigma\cdot x$ in this Cartesian basis.

In this description the projector $e$ also provides the tautological (monopole) bundle on the sphere. The space of sections of this bundle is the projective module 
\begin{equation}\label{projmod}\CE=\{e\begin{pmatrix}f \\ g\end{pmatrix}\ |\ f,g\in A\}.\end{equation}
An equivalence class $[e]$ of this `Bott projector' defines an element of the $K$-theory of the sphere. 

{\bf 2.2}  The fuzzy sphere is the standard quantisation of a coadjoint orbit in $su_2^*$ with its Kirillov-Kostant Poisson structure. Thus, the enveloping algebra $U(su_2)$ is regarded as `fuzzy $\R^3$'\cite{BatMa:non} which we write for our purposes as the $*$-algebra 
\begin{equation} [x_i,x_j]=-\imath\lambda\eps_{ijk}x_k,\quad \sum_i x_i^2={1-\lambda^2\over 4}\label{fuzzy},\quad x_i^*=x_i\end{equation}
where $\eps_{123}=1$ and $\eps_{ijk}$ is totally antisymmetric. Note that in the conventions here are for a fuzzy sphere of radius $1/2$ and our real parameter $\lambda$ is dimensionless; by suitable rescaling of the generators one can recover the formulae for general radius and the physical $\lambda$ in the physics literature. (In suitable units the radius is often taken to be discrete so that the algebras are matrix blocks but this is not required in the noncommutative geometry as explained in \cite{BatMa:non}). It is easy to check using the usual properties $\sigma_i\sigma_j=\delta_{ij}+\imath\eps_{ijk}\sigma_k$ of the Pauli matrices that the  monopole projector $e$ is now deformed to $e={1+\lambda\over 2}-\sigma\cdot x$.

It is also easy in the point of view of \cite{BraMa:qua} to run the calculation that $e$ is a projector  backwards, i.e. we  write $e=\begin{pmatrix} 1+\lambda-a & b \\ b^* & a\end{pmatrix}$ so that 
\begin{equation}\label{trlambda} \trace(e)=1+\lambda\end{equation}
as a deformation of $\C P^1$, where we fix $\lambda$ to be some real number. We require $a^*=a$ to maintain the hermitian condition and the remaining projector relation in  (\ref{projrel}) becomes
\[ [a,b]=\lambda b,\quad [b,b^*]=2\lambda(a-{1+\lambda\over 2}),\quad b^*b=a(1-a).\]
The first two are $\lambda$-deformed commutation relations while the last enforces the `sphere'. If we write $a=x_3+ (1+\lambda)/2$ and $b=-x_1+\imath x_2$ we obtain the $*$-algebra (\ref{fuzzy}) and the $\lambda$-monopole projector.

{\bf 2.3} Let $V$ be a rigid object in a $k$-linear braided category ($k$ a field). Rigid here means in practice finite-dimensional and is defined as the existence of evaluation and coevaluation morphisms $V^*\tens V\to k$ and $k\to V\tens V^*$ defined in the obvious way as $\<f^a,e_b\>=\delta^a_b$ and $1\mapsto \sum_a e_a\tens f^a$ for a basis and dual basis of $V$. In a standard diagrammatic representation one has a canonical braided trace defined by
\[ \underline{\trace}(\phi)=\ev\circ(\id\tens\phi)\circ\Psi_{V,V^*}\circ\coev
,\quad\forall \phi:V\to V\]
where $\Psi$ is the braiding that exists between any two objects in the category. 

We work over $k=\C$. The quantum group $C_q[SU_2]$ defines a braided category as its category of comodules (in other words, representations of the the quantum `group') and for the spin 1/2 representation 
\[\underline{ \trace}\begin{pmatrix}a&b\\ c&d\end{pmatrix}=a+q^2d=\trace_q\begin{pmatrix}a&b\\ c&d\end{pmatrix}\]
in a certain normalisation (which we choose for convenience in what follows). The right hand side here is called the $q$-deformed trace and we see how it arises from the braided category\cite{Ma:book}. The matrix here  is viewed as an operator $\C^2\to \C^2$ with values which could be in some other space (in our case in a coordinate $*$-algebra) considered as `bosonic'. If $a,b,c,d$ transform in the same way under $C_q[SU_2]$ as a quantum or braided matrix (under the quantum adjoint coaction) then the $q$-trace is invariant. Hence any relation defined through the $q$-trace is also covariant under this quantum group.

In particular, we look at a matrix of generators $e=\begin{pmatrix} 1-q^{2}a & b \\ b^* & a\end{pmatrix}$ which has
\[ \trace_q(e)=1.\]
We suppose that $q$ is real and $a=a^*$ as before, so that $e^\dagger=e$. The remaining projector relation in (\ref{projrel}) becomes
\[ ba=q^2ab,\quad bb^*=q^4b^*b+aq^2(1-q^2),\quad b^*b=a(1-a)\]
which are the defining relations of the standard $q$-sphere. The first two are $q$-deformations of the commutativity relations while the last enforces the `sphere'. In terms of generators $b_\pm,b_0$ in \cite{Ma:rieq} (as arising out of the $q$-monopole quantum principal bundle) the conversion is
\[ a=-q^{-1}b_0,\quad b=b_+,\quad b^*=-qb_-.\]
The projector $e$ then becomes the one found for the $q$-monopole in \cite{HajMa:pro} up to choice of conventions. This observation was mentioned in \cite{BraMa:qua}.

{\bf 2.4} Clearly we could assign to $\trace_q(e)$ some other real number and retain invariance under $C_q[SU_2]$. Thus the above point of view suggests, as a $q$-deformation of the fuzzy sphere, to require
a projector of the form $e=\begin{pmatrix} 1+\lambda-q^2a & b \\ b^* & a\end{pmatrix}$ so that
\[ \trace_q(e)=1+\lambda\]
where $\lambda$ is a second parameter, also real. We again have $a^*=a$ to retain the hermitian condition. Then the remaining projector relations in (\ref{projrel}) become
\begin{equation}\label{qfuzzy} q^2ab-ba=\lambda b,\quad bb^*=(q^2a-\lambda)(1+\lambda-q^2a),\quad b^*b=a(1-a).\end{equation}
\begin{propos} The $q$-fuzzy sphere $*$-algebra defined by generators $a=a^*, b,b^*$ and real parameters $q^2,\lambda$ (with $q$ invertible) and relations (\ref{qfuzzy}) is
\begin{enumerate}
\item  isomorphic to the fuzzy sphere if $q^2=1$.
\item isomorphic to the  standard $q$-sphere if $\lambda=0,q^2-1$. 
\item isomorphic to the nonstandard Podle\'s sphere if $q^2\ne 1$ and $\lambda\ne 0, q^2-1$. In this case 
\[ s^2={\lambda\over q^2-1-\lambda}\]
(if we want $s$ to be real then we require $\lambda$ to lie between 0 and $q^2-1$).  
\item invariant under $\lambda\mapsto q^2-1-\lambda$ up to isomorphism $b\mapsto b, a\mapsto 1-a$.
\end{enumerate}
\end{propos}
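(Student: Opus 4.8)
The plan is to handle parts (1), (2) and (4) by direct substitution into the three relations (\ref{qfuzzy}), keeping (3) as the only substantial computation. For (1) I would set $q^2=1$: the first relation of (\ref{qfuzzy}) becomes $[a,b]=\lambda b$, the third is unchanged, and subtracting the third from the second and expanding gives $[b,b^*]=2\lambda(a-{1+\lambda\over 2})$, which is exactly the fuzzy sphere presentation recalled in Section~2.2; hence the identity map on $a,b,b^*$ is a $*$-algebra isomorphism. For (2) with $\lambda=0$ the relations read $ba=q^2ab$, $bb^*=q^2a(1-q^2a)$, $b^*b=a(1-a)$, and rewriting the middle one as $bb^*=q^4b^*b+aq^2(1-q^2)$ identifies it with the standard $q$-sphere of Section~2.3 via the identity map; the case $\lambda=q^2-1$ then follows by composing with the isomorphism of part (4), which carries $\lambda=q^2-1$ to $\lambda=0$.

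For part (4) I would substitute $a\mapsto 1-a$ (keeping $b,b^*$) and write $a'=1-a$, $\lambda'=q^2-1-\lambda$. The first relation of (\ref{qfuzzy}) becomes $q^2a'b-ba'=\lambda' b$; the third is unchanged since $a(1-a)=a'(1-a')$; and in the second, $q^2(1-a')-\lambda=1+\lambda'-q^2a'$ and $1+\lambda-q^2(1-a')=q^2a'-\lambda'$, so (these two factors being polynomials in $a'$, hence commuting) it becomes $bb^*=(q^2a'-\lambda')(1+\lambda'-q^2a')$. Thus $a\mapsto 1-a$, $b\mapsto b$ is an isomorphism onto the $q$-fuzzy sphere with parameter $\lambda'$, and $\lambda\mapsto q^2-1-\lambda$ is the claimed involution.

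For part (3) I would seek an isomorphism to the Podle\'s presentation (\ref{podles}) of the form $x=ca+d$, $z=\mu b$, $z^*=\mu b^*$ with $c,d,\mu$ real and $c,\mu\ne0$ (reality is forced by self-adjointness of $a$ and $x$). From the first relation of (\ref{qfuzzy}) one has $ba=(q^2a-\lambda)b$, so imposing $zx=q^2xz$ forces $d(1-q^2)=\lambda c$ (here $q^2\ne1$ is used). Expanding the two product relations of (\ref{podles}) in terms of $a$ and comparing them with $\mu^2 b^*b=\mu^2a(1-a)$ and $\mu^2 bb^*=\mu^2(q^2a-\lambda)(1+\lambda-q^2a)$: the $a^2$-coefficients give $\mu^2=c^2$, and the remaining coefficients give a small system whose solution (on the branch $s^2=-d$; the other branch differs by the involution of part~(4), giving the isomorphic algebra with parameter $1/s^2$) is
\[ d=\frac{\lambda}{1-q^2+\lambda},\qquad c=\frac{1-q^2}{1-q^2+\lambda},\qquad s^2=\frac{\lambda}{q^2-1-\lambda}. \]
The hypothesis $\lambda\ne q^2-1$ makes the common denominator nonzero and $q^2\ne1$ makes $c\ne0$, so the substitution is invertible and visibly respects the $*$-structures; and $\lambda\ne0$ is exactly what makes $s^2\ne0$, placing us on a genuinely nonstandard Podle\'s sphere rather than the $q$-sphere of part~(2).

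The routine content is the substitutions in (1), (2), (4); the real work is in (3), where the two things to get right are the affine ansatz $x=ca+d$ (rather than $x$ proportional to $a$) and the consistency of the over-determined coefficient system --- once $c,d,s^2$ are pinned down from the first relation and the $b^*b$ relation, the $a^0$-coefficient of the $bb^*$ relation is the one genuine consistency check that must still be verified. The remaining point of care is tracking which scalars must be nonzero for each change of variables to be invertible.
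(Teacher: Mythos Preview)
Your proof is correct and follows essentially the same route as the paper. The only organisational difference is that the paper carries out the change of variables for part (3) in two steps (first the shift $a=x'+\lambda'$ with $\lambda'=\lambda/(q^2-1)$, then a rescaling $x'=\mu x$, $b=\nu z$), whereas you combine these into a single affine ansatz $x=ca+d$; the resulting $c,d,s^2$ agree, and the paper's second solution $\mu=-\lambda'$ corresponds exactly to your $d=1$ branch giving $1/s^2$. Your ordering, deriving the $\lambda=q^2-1$ case of (2) from the involution (4) rather than directly, is a harmless rearrangement of the same argument.
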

\proof The cases $q^2=1$ in part (1) and $\lambda=0$ in part (2) are clear after minor rearrangements. If $\lambda=q^2-1$ also in part (2)  then the result is again a standard $q$-sphere in terms of $b,b^*$ and a new variable $a'=1-a$ in the role of $a$. For part (3) we first change variables to $a=x'+\lambda'$ where $\lambda'=\lambda/(q^2-1)$. The offset here turns the relations in (\ref{qfuzzy}) to
\[ bx'=q^2x'b, \quad bb^*=(q^2x'+\lambda')(1-\lambda'-q^2x'),\quad b^*\ b=(x'+\lambda')(1-\lambda'-x')\]
which starts to resemble (\ref{podles}). We now consider $x'=x\mu$ and $b=z\nu$ for real parameters $\mu,\nu$. Comparing with (\ref{podles}) we require 
\[ \mu^2=\nu^2,\quad{ \lambda'(1-\lambda')\over\nu^2}=s^2,\quad 1-s^2={\mu(1-2\lambda')\over\nu^2}\]
which we take as a definition of $\nu$ (say $\nu=\pm\mu$), of $s^2$ and an equation for $\mu$ respectively. The latter has solutions $\mu=1-\lambda',-\lambda'$ and we take the first with $\nu=\mu$, say. Then $s^2=\lambda'/(1-\lambda')$ which works out as stated. The chosen algebra isomorphism is given by
\[ b=(1-\lambda')z,\quad a=x(1-\lambda')+\lambda',\quad \lambda'={\lambda\over q^2-1}.\]
The other choice of $\mu$ and $\nu=-\mu$ (for convenience) gives 
\[ b=\lambda'z,\quad a=\lambda'(1-x),\quad s^2=(1-\lambda')/\lambda'\]
as another isomorphism with (\ref{podles}), but with inverse $s$ to the previous choice (hence the Podle\'s sphere is invariant under inversion of $s$ up to rescaling of generators).  This leads to part (4). In terms of the $q$-fuzzy sphere the invariance appears as stated and one can verify directly that it applies in all cases. It corresponds to inversion of $s^2$ but usually one is interested in the sector $q\in(-1,1)$ and $s^2\in [0,1]$, so we do not see this, or in the discrete series $s^2=-q^{2n}$ where it corresponds to inversion of $q$.
\endproof

 By construction, the $q$-fuzzy sphere comes with a tautological projective module, namely cross-sections of  the $q$-fuzzy monopole as defined by (\ref{projmod}). This necessarily agrees, up to conventions, with the previous special cases and with the projector for the nonstandard Podle\'s sphere found in \cite{BrzMa:geo}.

{\bf 2.5} Clearly, this construction works for any rigid object of a $\C$-linear braided category. Let $R\in M_2\tens M_2$ obey the braid or `quantum Yang-Baxter' equations. Here $V=\C^2$ with basis $\{e_i|i=1,2\}$ and dual basis $\{f^i\}$ and \cite[Ex 9.3.12]{Ma:book}
\[ 
\Psi_{V,V}(e_i\tens e_j)=e_b\tens e_a R^a{}_i{}^b{}_j,\quad \Psi_{V,V^*}(e_i\tens f^j)=\tilde R^a{}_i{}^j{}_bf^b\tens e_a\]
where $\tilde R$ is the `second inverse' of $R$. We then find the 
braided trace of a projection $e=(e^i{}_j)$ as
\[ \underline{\trace}(e)=\trace(eu);\quad u^i{}_j=\tilde R^a{}_j{}^i{}_a.\]
Hence the relations of the general braided-fuzzy-sphere are
\begin{equation}\label{brasphere} \trace(eu)=1+\lambda,\quad e^2=e,\quad e^\dagger=e.\end{equation}
\begin{propos} We obtain a $*$-algebra with relations (\ref{brasphere}) if $\lambda$ is real and $R$ is of `real type' in the sense \cite[Defn. 4.2.15]{Ma:book}
\[ \overline{R^i{}_j{}^k{}_l}=R^l{}_k{}^j{}_i.\]
\end{propos}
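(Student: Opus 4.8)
The plan is to show that the two-sided ideal $I$ which cuts the algebra out of the free $*$-algebra on the $2\times 2$ matrix of generators $e=(e^i{}_j)$ is a $*$-ideal, where the $*$-structure on generators is nothing but the hermiticity condition $e^\dagger=e$, i.e. $(e^i{}_j)^*=e^j{}_i$. Since $I$ is generated by the projector relations $(e^2)^i{}_j=e^i{}_j$ together with the single trace relation $\sum_{i,j}e^i{}_j\,u^j{}_i=1+\lambda$, where $u^i{}_j=\tilde R^a{}_j{}^i{}_a$, it is enough to check that the antihomomorphism $*$ sends each of these generating relations into $I$.

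The projector relations require no hypothesis on $R$: applying $*$ to $\sum_k e^i{}_k e^k{}_j-e^i{}_j$ and using $(e^i{}_j)^*=e^j{}_i$ returns $\sum_k e^j{}_k e^k{}_i-e^j{}_i$, which is simply the $(j,i)$ entry of the same matrix identity $e^2=e$. Hence the family $\{(e^2)^i{}_j-e^i{}_j\}$ is already permuted among itself by $*$ and stays in $I$. For the trace relation, applying $*$ gives $\big(\sum_{i,j}e^i{}_j\,u^j{}_i\big)^*=\sum_{i,j}\overline{u^j{}_i}\,e^j{}_i=\sum_{i,j}\overline{u^i{}_j}\,e^i{}_j$, while the scalar side becomes $\overline{1+\lambda}=1+\lambda$ precisely because $\lambda$ is real. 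So it suffices to have $\sum_{i,j}\overline{u^i{}_j}\,e^i{}_j=\sum_{i,j}u^j{}_i\,e^i{}_j$ in the free algebra, and this holds identically once $u$ is a hermitian matrix, $\overline{u^i{}_j}=u^j{}_i$. The whole problem therefore reduces to showing that the real-type condition $\overline{R^i{}_j{}^k{}_l}=R^l{}_k{}^j{}_i$ forces $u$ to be hermitian.

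I would obtain this by first showing that the second inverse $\tilde R$ inherits the same reality shape, $\overline{\tilde R^i{}_j{}^k{}_l}=\tilde R^l{}_k{}^j{}_i$: conjugating the defining relation of $\tilde R$ entrywise and substituting for $\overline{R}$ using the real-type identity produces exactly the defining relation for the index-reversed conjugate of $\tilde R$, so by uniqueness of the second inverse of $R$ the two coincide. Specialising this to $\overline{\tilde R^a{}_j{}^i{}_a}=\tilde R^a{}_i{}^j{}_a$ and summing over $a$ reads $\overline{u^i{}_j}=u^j{}_i$, as required.

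The main obstacle is precisely this last step: one has to recall the exact form of the defining relation for the second inverse $\tilde R$ (and of its uniqueness), then run the index bookkeeping carefully, checking in particular that no stray factor of $u$ intervenes; it is routine, but it is the only place where the real-type hypothesis is actually used. A more conceptual route would note that $\trace(eu)=\underline{\trace}(e)$ is the value of a morphism in the comodule category and that a real-type $R$ makes that category a bar category, on which the braided trace of a self-adjoint morphism is automatically real; but the elementary computation above is the shortest.
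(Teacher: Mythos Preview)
Your argument is correct and follows essentially the same route as the paper. Both proofs reduce the $*$-consistency of the trace relation to the hermiticity of $u$, and both obtain this from the real-type hypothesis on $R$; you are simply more explicit than the paper in spelling out that $\tilde R$ inherits the real-type condition and in checking the projector relations separately. The paper's proof additionally remarks on covariance under the associated quantum group, but that is extra content beyond the proposition as stated.
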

\proof The second inverse is defined by $\tilde R=((R^{t_2})^{-1})^{t_2}$ where $t_2$ denoted transpose in the second copy of $M_2$ (the last two indices of $R$). The reality condition implies that the matrix $u$ is hermitian and this in turn implies that the quantity $\trace(eu)$ is self-adjoint in the $*$-algebra. It therefore makes sense to assign to it a real value as stated. The $R$-matrix also leads to a quantum group with coquasitriangular structure \cite[Prop. 4.2.2]{Ma:book} by construciton $\trace(eu)$ will be invariant under its coaction where $e$ transforms in the adjoint coaction (by `conjugation of the matrix generators), as one may directly check. As the other relations are likewise covariant (here again the reality property of $R$ is used) one has covariance of the resulting braided sphere, i.e. it lives in the braided category of comodules of the quantum group. \endproof

A great many $M_2\tens M_2$ bi-invertible solutions $R$ of real type are known and hence we obtain $R$-fuzzy spheres  and $R$-monopole bundles for all of them. They include 2-parameter deformations and the symmetric (not braided) Jordanian solution for real parameter values, among others 
\cite{Ma:book}. Note that the normalisation of $R$ affects the value of the parameter $\lambda$ but for a fixed normalisation (a fixed braided category) the parameter $\lambda$ represents an additional freedom. 

\section{$q$-fuzzy sphere as a quotient of $U_q(su_2)$ and of the $q$-hyperboloid}

Just as one can write the fuzzy sphere as a quotient of $U(su_2)$ by a specified value of the Casimir, in view of the above derivation we would expect to be able to do the same now. This turns out to be possible but only provided we allow $s^2$ to be negative  (when $q$ is real as we assume throughout). 

{\bf 3.1} We first recall the standard structure of the quantum group $U_q(su_2)$ as having generators $x_\pm, K,K^{-1}$  (one can write $K=q^{H/2}$ as a suggestive notation), with relations
\[ Kx_\pm K^{-1}=q^{\pm 1} x_\pm,\quad [x_+,x_-]={K^2-K^{-2}\over q-q^{-1}}, \quad K^*=K,\quad x_\pm^*=x_\mp\] 
where the $*$-structure is for real $q$. The algebra has a $q$-deformed quadratic Casimir
\[ c_q=K^2q^{-1}+qK^{-2}+x_+x_-(q-q^{-1})^2.\]
We refer to \cite{Ma:book} for further details in these conventions.

\begin{propos} 
Let $q\ne 1$ and $s=\imath t$ where $t$ is real. The `patch' in the Podle\'s $2$-sphere where we adjoin $x^{-1}$ is isomorphic to the algebra  $U_q(su_2)$ modulo the relation $c_q=t+t^{-1} $.
\end{propos}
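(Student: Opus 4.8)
The plan is to prove the isomorphism by writing down an explicit $*$-algebra map $\Phi$ from the $x^{-1}$-localised Podle\'s algebra into $U_q(su_2)/(c_q-t-t^{-1})$, checking that it respects (\ref{podles}), and then showing it is bijective. It is convenient to run the computation through the $q$-fuzzy presentation $(a,b,b^*)$ of (\ref{qfuzzy}): by Proposition~1(3) the Podle\'s patch where $x^{-1}$ is adjoined becomes, after the affine substitution $a=x(1-\lambda')+\lambda'$ with $\lambda'=\lambda/(q^2-1)$, the localisation of the $q$-fuzzy sphere at the invertible element $a-\lambda'$ (a scalar multiple of $x$), and $s=\imath t$ gives $\lambda'=s^2/(1+s^2)=-t^2/(1-t^2)$.

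I would then take
\[ \Phi(x)=q^{-1}t\,K^{-2},\qquad \Phi(z)=\nu\,x_+K^{-1},\qquad \Phi(z^*)=\nu\,K^{-1}x_-\]
for a constant $\nu$ to be determined (equivalently $a\mapsto\lambda'+\gamma K^{-2}$, $b\mapsto\eta\,x_+K^{-1}$ on the $q$-fuzzy generators). For this ansatz the relation $zx=q^2xz$ is automatic, since $\Phi(z)$ has weight $1$ for the adjoint action of $K$ and $\Phi(x)$ is a multiple of $K^{-2}$; the content is in the two `sphere' relations. To verify $\Phi(z^*)\Phi(z)=(\Phi(x)-t^2)(1-\Phi(x))$ and $\Phi(z)\Phi(z^*)=(q^2\Phi(x)-t^2)(1-q^2\Phi(x))$ I would use the Casimir relation to eliminate the quadratic terms,
\[ (q-q^{-1})^2\,x_\pm x_\mp=c_q-q^{\mp1}K^2-q^{\pm1}K^{-2}=-q^{\mp1}K^{-2}(K^2-q^{\mp1}t)(K^2-q^{\mp1}t^{-1}),\]
reduce $\Phi(z^*)\Phi(z)$ and $\Phi(z)\Phi(z^*)$ to scalars times $x_-x_+K^{-2}$ and $x_+x_-K^{-2}$ respectively, and match coefficients of $K^0,K^{-2},K^{-4}$. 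Leaving $c_q$ undetermined, the $K^0$- and $K^{-4}$-coefficients both force $\nu^2=t^2(q-q^{-1})^2/q$, and the $K^{-2}$-coefficient then collapses to precisely $c_q=t+t^{-1}$ — this is how the value of the Casimir emerges. Finally, compatibility with $*$ needs the offset $\gamma$ (equivalently $\nu$) real, and one checks $\gamma^2=-\lambda'(1-\lambda')/q^2$ is positive exactly when $\lambda'\notin[0,1]$, i.e.\ when $s^2<0$; this is why $s$ must be taken imaginary, as in the remark before the statement.

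It then remains to see $\Phi$ is an isomorphism, and here surjectivity is the main obstacle. Injectivity is routine from a leading-term argument: $\Phi$ carries the PBW monomials $z^nx^m$ and $(z^*)^nx^m$ ($n\ge0$, $m\in\Z$) to nonzero scalar multiples of $x_+^nK^{-n-2m}$ and $x_-^nK^{-n-2m}$, which are linearly independent in $U_q(su_2)/(c_q-t-t^{-1})$. For surjectivity the key observation is that inverting $x$ is precisely what puts the group-like powers $K^{\pm2}$ into the image, via $\Phi(x^{-1})=qt^{-1}K^2$; one must then show that $K^{\pm2}$ together with $\Phi(z),\Phi(z^*)$ generate everything. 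I would finish by identifying the image — the subalgebra generated by $K^{\pm2}$ and $x_\pm K^{-1}$ — with the full quotient: rescaling $x_\pm K^{-1}$ by suitable powers of $K^2$ yields, with $K^2$ in the role of the group-like, generators obeying the standard $U_q(sl_2)$ relations at Casimir value $t+t^{-1}$, so the claim reduces to recognising $U_q(su_2)/(c_q-t-t^{-1})$ in that presentation. The one thing to be careful about is exactly this last step: the group-like naturally appearing in the image is $K^{\pm2}$ rather than $K^{\pm1}$, so one should check that the presentation of $U_q(su_2)$ being matched is the one on $K^{\pm2},x_\pm$.
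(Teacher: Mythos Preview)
Your approach is essentially the paper's: write down an explicit ansatz sending $x$ to a scalar multiple of a power of $K$ and $z$ to a scalar multiple of a product of $K^{\pm1}$ with $x_\pm$, then solve for the scalars by matching the Podle\'s relations term-by-term in powers of $K$. The paper takes $x=q^{-1}t\,K^{2}$, $z=q^{1/2}t(q-q^{-1})\,Kx_-$; you take $x\mapsto q^{-1}t\,K^{-2}$, $z\mapsto \nu\,x_+K^{-1}$. These differ by composing with the algebra automorphism $K\mapsto K^{-1}$, $x_\pm\mapsto x_\mp$, under which $c_q$ is invariant, so the two ans\"atze are equivalent. Your derivation of $\nu^2=t^2(q-q^{-1})^2/q$ and $c_q=t+t^{-1}$ is correct and parallels the paper's computation exactly; the detour through the $q$-fuzzy generators $(a,b)$ is unnecessary since you in any case verify the relations in the $(x,z)$ variables.

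Where you go beyond the paper is in the discussion of bijectivity, which the paper simply asserts. Your observation that the image is generated by $K^{\pm2}$ and $x_\pm K^{\mp1}$, hence lies in the ``even'' subalgebra for the $\Z/2$-grading with $K,x_\pm$ odd, is correct and is a genuine point: neither your map nor the paper's hits the odd element $K$. The resolution is a matter of conventions --- one should read $U_q(su_2)$ here as presented on the group-like $K^2$ (equivalently, the form with $Kx_\pm K^{-1}=q^{\pm2}x_\pm$), which is consistent with the paper's later identification of $B_q[SU_2]$ with $U_q(su_2)$ via $\alpha=K^2$ in Section~3.2. So your caveat is well placed but does not indicate a gap in the argument.
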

\proof We set $x=\mu K^2$, $z=\nu Kx_-$ and hence $z^*=\nu x_+K$. We take $\mu,\nu$ real (the latter for simplicity). Then clearly the $zx=q^2xz$ relation holds. We likewise compute
\[ z^*z=\nu^2x_+K^2x_-={\nu^2 q^{-2}\over (q-q^{-1})^2}(c_q-K^2q^{-1}-qK^{-2})K^2={\nu^2 q^{-2}\over (q-q^{-1})^2}(c_q{x\over\mu}-{x^2\over \mu^2 q}-q).\]
Comparing with the last of (\ref{podles}) we need
\[ \mu^2 q^2=t^2,\quad 1+t^2=c_q \mu q,\quad \nu^2=qt^2(q-q^{-1})^2. \]
We solve this with $\mu=q^{-1}t$ and $\nu=q^{{1\over 2}}t(q-q^{-1})$ (say). The choice of signs of the square roots is not fixed by present considerations (see later). We then verify the middle relation in (\ref{podles}) and indeed that the two algebras are then isomorphic as stated. \endproof

{\bf 3.2} For a full geometrical picture we need the $*$-algebra $B_q[SU_2]$. This is defined as the transmutation of $C_q[SU_2]$ to a braided Hopf algebra in the braided category of $U_q(su_2)$ (left) modules (a braided group), see \cite[Chap. 10]{Ma:book}. It is a covariant version of $C_q[SU_2]$ and is a quotient by a braided-determinant of a braided bialgebra $B_q[M_2]$ of $2\times 2$ braided Hermitian matrices. The latter is a natural candidate for $q$-Minkowski space and has a matrix of generators $u=\begin{pmatrix}\alpha & \beta \cr \beta^* &\delta\end{pmatrix}$ where $\alpha^*=\alpha$ and $\delta^*=\delta$, with
\[ \beta\alpha=q^2\alpha\beta,\quad \delta\alpha=\alpha\delta,\quad [\beta,\beta^*]=(1-q^{-2})\alpha(\delta-\alpha),\quad [\delta,\beta]=(1-q^{-2})\alpha\beta.\]
The additional braided determinant relation for $B_q[SU_2]$ is
\[ \alpha\delta-q^2\beta^*\beta=1\]
which from the point of view of $q$-geometry makes this a $q$-hyperboloid or `mass-shell' of unit Lorentzian distance from the origin. Both algebras have a natural central element $\tr_q(u)=q^{-1}\alpha+q\delta$ which is, up to a normalisation, the braided trace used before. It is the time coordinate in the $q$-Minkowski space in usual cartesian coordinates. 

\begin{propos} Let $s=\imath t$ where $t\ne 0$ is real. Then the Podle\'s $2$-sphere is isomorphic to the `time slice' $\tr_q(u)=t+t^{-1}$ of $B_q[SU_2]$. Moreover, when $t^2\ne 1$ the projector $e$  and the parameter $\lambda$ in Section~2 for the general Podle\'s sphere takes the form
\[ e={1-qt u\over 1-t^2},\quad \lambda=t^2{1-q^2\over 1-t^2}\]
\end{propos}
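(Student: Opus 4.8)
The plan is to prove both assertions by an explicit change of generators, paralleling the proof of the preceding proposition on the $x$-invertible patch but working directly inside $B_q[SU_2]$ so that no localisation is needed. On the time slice the central relation $q^{-1}\alpha+q\delta=t+t^{-1}$ lets us eliminate $\delta$, writing $\delta=q^{-1}(t+t^{-1})-q^{-2}\alpha$; one checks at once that under this substitution the braided relations $\delta\alpha=\alpha\delta$ and $[\delta,\beta]=(1-q^{-2})\alpha\beta$ become automatic consequences of $\beta\alpha=q^2\alpha\beta$, so the time-slice algebra is generated by $\alpha,\beta,\beta^*$ subject to $\beta\alpha=q^2\alpha\beta$ together with the two quadratic relations obtained from $[\beta,\beta^*]=(1-q^{-2})\alpha(\delta-\alpha)$ and from the braided determinant $\alpha\delta-q^2\beta^*\beta=1$ after substitution.

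Next I would set $x=q^{-1}t\,\alpha$, $z=qt\,\beta$, $z^*=qt\,\beta^*$ (all scalars real, so the $*$-structure is respected). Then $zx=q^2xz$ is immediate, and, substituting for $\delta$, the two quadratic relations collapse by a routine collection of terms exactly to $z^*z=(s^2+x)(1-x)$ and $zz^*=(s^2+q^2x)(1-q^2x)$ with $s^2=-t^2$, i.e.\ $s=\imath t$; running the computation in reverse, (\ref{podles}) with $s=\imath t$ returns the time-slice relations. Hence the two $*$-algebras are isomorphic for every real $t\neq0$. (Alternatively one can note that $\tr_q(u)$ matches $c_q$ under the localisation $B_q[SU_2]\subset U_q(su_2)$ and invoke the preceding proposition, but that only handles the open locus where $\alpha$ is invertible, whereas the direct argument does not.)

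For the second assertion, with $t^2\neq1$ so that $1-t^2$ is invertible in $\C$, I would put $\tilde e:=(1-qt\,u)/(1-t^2)$ and show it is the Section~2 projector under the above isomorphism. Hermiticity $\tilde e^{\,\dagger}=\tilde e$ is immediate from $u^\dagger=u$ and the reality of $q,t$. For idempotency, expanding $\tilde e^{\,2}=\tilde e$ reduces it to the single quadratic matrix identity $u^2=\frac{1+t^2}{qt}\,u-q^{-2}\,1$, which is precisely the braided Cayley--Hamilton identity $u^2=q^{-1}\tr_q(u)\,u-q^{-2}\,1$ for the $2\times2$ braided matrix $u$ (valid on $B_q[SU_2]$, where its braided determinant $\alpha\delta-q^2\beta^*\beta$ equals $1$) restricted to $\tr_q(u)=t+t^{-1}$. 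I would verify this matrix identity entrywise against the braided-matrix relations: the $(1,2)$-entry is exactly $[\delta,\beta]=(1-q^{-2})\alpha\beta$, the $(2,2)$-entry follows from the braided determinant, the $(1,1)$-entry from the braided determinant together with $[\beta,\beta^*]=(1-q^{-2})\alpha(\delta-\alpha)$, and the $(2,1)$-entry is the conjugate of the $(1,2)$; the consistency of the two expressions for the quadratic coefficient is just $\frac{1+t^2}{qt}=q^{-1}(t+t^{-1})$, the time-slice condition itself. Finally $\trace_q(\tilde e)$ in the Section~2 normalisation (first entry plus $q^2$ times the last) works out to $\bigl(1+q^2-q^2t\,\tr_q(u)\bigr)/(1-t^2)=(1-q^2t^2)/(1-t^2)$ on the time slice, so $\lambda=\trace_q(\tilde e)-1=t^2(1-q^2)/(1-t^2)$ as stated; since $\tilde e$ has the form $\begin{pmatrix}1+\lambda-q^2a&b\\ b^*&a\end{pmatrix}$ with $a=\tilde e_{22}$, $b=\tilde e_{12}$ and is a Hermitian idempotent of $q$-trace $1+\lambda$, its entries automatically satisfy (\ref{qfuzzy}), so $\tilde e$ is the projector $e$ of Section~2, and the resulting $s^2=-t^2$ agrees with $s^2=\lambda/(q^2-1-\lambda)$ from the $q$-fuzzy sphere classification.

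The main obstacle is this second assertion: checking $e^2=e$ by brute force is unpleasant, and the efficient route is to recognise it as the braided Cayley--Hamilton identity restricted to the time slice; without that observation the computation is the bottleneck. The isomorphism itself is then a matter of careful bookkeeping in eliminating $\delta$ and collecting terms, with the one subtlety, exactly as in the preceding proposition, that the sign choices in the rescaling scalars (constrained by $\nu^2=q^4\mu^2$ if $x=\mu\alpha,\ z=\nu\beta$) are not completely pinned down.
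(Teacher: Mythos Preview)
Your proof is correct. The change of variables $x=q^{-1}t\,\alpha$, $z=qt\,\beta$ is exactly the one the paper arrives at, and your quadratic identity $u^2=q^{-1}\tr_q(u)\,u-q^{-2}$ is the relation the paper writes as $u^2=-q^{-2}+(t+t^{-1})q^{-1}u$ and uses in the same way to verify that $e$ is a projector.

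The route differs slightly. The paper first passes to the localisation $B_q[SU_2]\hookleftarrow U_q(su_2)$, reads off the matrix $u$ in terms of $x,z,z^*$ by comparing with Proposition~3 (after replacing $K^{-2}$ by the Casimir), and only then checks that the resulting identification holds globally without inverting $\alpha$. You instead work directly in the quotient of $B_q[SU_2]$, eliminating $\delta$ via the time-slice relation and observing that the relations $\delta\alpha=\alpha\delta$ and $[\delta,\beta]=(1-q^{-2})\alpha\beta$ become redundant; this is cleaner in that it never invokes the localisation, though it trades that for a little more bookkeeping in collecting the two surviving quadratic relations. You even flag the paper's route as the ``alternative'' and correctly note its deficiency before the global check. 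For the projector, the paper compares entrywise with the Section~2 form of $e$ and reads off $\lambda'$, while you compute $\trace_q(\tilde e)$ directly; both are equivalent and both rely on the same Cayley--Hamilton identity for idempotency.
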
\proof Let us note by way of explanation that $B_q[SU_2]$ also has localisation if we allow $\alpha$ invertible which is isomorphic to the algebra $U_q(su_2)$ (this is part of the self-duality of these braided groups in a formal power-series setting). Namely \cite{Ma:book}
\[ u=\begin{pmatrix} K^2 & q^{-{1\over 2}}(q-q^{-1})Kx_-\cr q^{-{1\over 2}}(q-q^{-1})x_+K &\phantom{HHHH} K^{-2}+q^{-1}(q-q^{-2})^2x_+x_-\end{pmatrix}.\]
Replacing $K^{-2}$ in favour of $c_q$ and comparing with our previous proposition, we have
\[ u={1\over qt}\begin{pmatrix}q^2 x & z \cr z^* &\phantom{HH} t^2+1-x\end{pmatrix}\]
which we then verify to hold globally (not requiring $x$ invertible). Comparing further with
\[ e= (1-\lambda')\begin{pmatrix} 1-q^2x & -z \cr -z^* & x+{\lambda'\over 1-\lambda'}\end{pmatrix}\]
for the projector in Section~1, we arrive at the result stated provided $t^2\ne 1$. Here $\lambda'/(\lambda'-1)=t^2$ and $\lambda=\lambda'(q^2-1)$ which  we may solve in terms of $t$. Note that 
\[ u^2=-{1\over q^2}+ {t+t^{-1}\over q}u\]
from which one may directly verify that $e$ is a projector and that a projector built from a linear combination of $1$ and $u$ is only possible if $t^2\ne 1$ to give the above $e$ or its complement.   \endproof

\section{Quantum differential calculus on $B_q[SU_2]$ and $U_q(su_2)$}

{\bf 4.1} A first order differential calculus over an algebra such $C_q[SU_2]$,  means $\Omega^1$ a bimodule over the algebra and $\extd:C_q[SU_2]\to \Omega^1$ obeying the Leibniz rule. In addition the span over the algebra of the image of $\extd$ is all of $\Omega^1$ and, at least for generic $q$, kernel of $\extd$ is the linear span of the identity $1$ of the algebra (an optional connectedness condition). Such a notion has been extensively studied for  standard quantum groups and those $\Omega^1$ which are both left and right translation covariant (`bicovariant') under the quantum group coacting on itself have been classified. For $C_q[SU_2]$ and generic $q$ there is one smallest bicovariant calculus, of dimension 4. It was first found in \cite{Wor:dif}. For any bicovariant calculus on  a quantum group Woronowicz showed how to extend $\Omega^1$ to an exterior algebra $\Omega$ with $\extd$ extended as a graded-derivation. It is known (`Brzezinski's theorem') that the latter is a super or $\Z_2$-graded Hopf algebra.

{\bf 4.2} Although $q$-Minkowski space has  a known 4-dimensional differential calculus induced by its additive braided group structure (`braided coaddition')\cite{Ma:book}, this is {\em not} compatible with the $q$-determinant relation and hence does not descend to the  $q$-hyperboloid $B_q[SU_2]$. 

Instead we use that the latter is a multiplicative braided group obtained by a theory of `transmutation' from $C_q[SU_2]$. This theory works for all the standard quantum groups $C_q[G]$ (in fact for all coquasitriangular Hopf algebras). Briefly, we consider the braided category $\CC=\CM^{C_q[G]}$ of $C_q[G]$-comodules and apply braided Tannaka-Krein reconstruction \cite{Ma:bg} to the identity functor from this to itself (essentially). This therefore factors through the braided category $\CC^B$ of $B$-comodules in $\CC$ for a certain braided group $B=B_q[G]$ in the category $\CC$. This `transmutation' process renders a `braided version' of $C_q[G]$ which has the advantage of being fully $C_q[G]$-covariant by virtue of living in the category $\CC$. 

In the Appendix we suppose that $C_q[G]$ (or any other coquasitriangular Hopf algebra)  is equipped with a bicovariant calculus and similarly  apply transmutation to this. Thus we consider the functor $\CF$ induced by a map $\pi$
\[ \CF:\CM^{\Omega(C_q[G])}\to \CC,\quad \pi:\Omega(C_q[G])\to C_q[G],\quad  \pi|_{\Omega^0}=\id,\quad \pi|_{\Omega^i,i>0}=0\]
where by definition $\Omega^0=C_q[G]$. By a super-version of the reconstruction theorem we will obtain a $\Z_2$-graded (super) braided-group $\underline{\Omega}=\Omega(B_q[G])$, i.e. we take the result as a definition of the latter.  It is a differential graded algebra fully $C_q[G]$-covariant by virtue of living in the category $\CC$. The exterior derivative $\extd$ is unchanged on the underlying vector spaces and remains a graded derivation.

The general result  expressed in $R$-matrix form is found to be the following. $\Omega^1(B_q[G])$ is a free module over $B_q[G]$ spanned by $M_n(\C)$, i.e. with basis $\{e_\alpha{}^\beta\}$ (these are identified in the transmutation process with the standard left-invariant 1-forms on $C_q[G]$ for suitable $n$) but with a bimodule structure:
\begin{equation}\label{Rcalc}  R^m{}_\alpha{}^a{}_d R^{-1}{}^\beta{}_n{}^d{}_c e_m{}^n u^c{}_b=u^a{}_c e_m{}^n R^m{}_\alpha{}^c{}_d R^d{}_b{}^\beta{}_n.\end{equation}

in our right-comodule conventions (they may take a more compact form in other conventions). The calculus is inner for generic $q$, 
\begin{equation}\label{theta} \theta=e_\alpha{}^\alpha,\quad \extd =(1-q^{-2})^{-1}[\theta, (\ )].\end{equation}
The element $\theta$ is invariant under the coaction  and has relations unchanged under transmutation. The higher exterior algebra has an unchanged form of relations among the $\{e_\alpha{}^\beta\}$,  i.e the same as for $\Omega(C_q[G])$.

{\bf 4.3} Specifically in the case of $B_q[SU_2]$ we obtain:

\begin{propos}  $B_q[SU_2]$ has a natural 4-dimensional adjoint $C_q[SL_2]$-covariant differential calculus with left basis 1-forms $(e_\alpha{}^\beta)=\begin{pmatrix}e_a& e_b\\ e_c & e_d\end{pmatrix}$ (so $e_c=e_2{}^1)$ and bimodule relations
\[ [e_a,\alpha]_q=[e_a,\beta]_{q^{-1}}=[e_c,\beta]_q=[e_b,\alpha]_{q^{-1}}=[e_b,\gamma]_q=0\]
\[ [e_a,\gamma]_q=\mu \alpha e_b,\quad [e_a,\delta]_{q^{-1}}=\mu \beta e_b+q\mu^2 \alpha e_a,\quad [e_c,\alpha]_q=q^2\mu \beta e_a,\quad [e_b,\beta]_{q^{-1}}=\mu \alpha e_a\]
\[  [e_b,\delta]_q=q^2\mu \gamma e_a,\quad [e_d,\alpha]_{q^{-1}}=\mu \beta e_b,\quad  [e_d,\beta]_q=\mu \alpha e_c+q\mu^2 \beta e_a,\quad [e_d,\gamma ]_{q^{-1}}=\mu (\delta-\alpha)e_b\]
\[ [e_d,\delta]_q=-\mu\beta e_b+q\mu^2(\delta-\alpha) e_a+\mu\gamma e_c,\quad [e_c,\gamma]_{q^{-1}}=\mu(\delta -\alpha)e_a+\mu\alpha e_d+q\mu^2 \beta e_b\]
\[ [e_c,\delta]_{q^{-1}}=\mu(q^2-2)\beta e_a+q^2\mu \beta e_d+q\mu^2\alpha e_c\]
The calculus is inner for $q^2\ne 1$ with $\theta=e_a+e_d$. Here $\mu=1-q^{-2}$ is a shorthand. 
\end{propos}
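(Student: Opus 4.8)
The plan is to obtain this calculus on $B_q[SU_2]$ by specialising the general $R$-matrix formula (\ref{Rcalc}) to the standard $SU_2$ $R$-matrix, since the proposition in Section~4.2 already guarantees that transmutation of the 4D Woronowicz calculus on $C_q[SU_2]$ produces a $C_q[SL_2]$-covariant differential graded algebra with free module $\Omega^1$ spanned by the $e_\alpha{}^\beta$. So the content of this proposition is purely computational: unpack (\ref{Rcalc}) for $n=2$ and read off the sixteen bimodule relations. First I would fix conventions for the $2\times 2$ $R$-matrix $R^i{}_j{}^k{}_l$ of $C_q[SL_2]$ and its inverse $R^{-1}$, and write $u^a{}_b$ for the matrix of generators $\begin{pmatrix}\alpha&\beta\\ \gamma&\delta\end{pmatrix}$ (with $\gamma=\beta^*$). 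Then (\ref{Rcalc}) becomes a $2^6$-component identity; contracting the index sums and using $R^i{}_j{}^k{}_l = q^{\delta_{ij}}\delta_{il}\delta_{jk} + (q-q^{-1})\theta(i<j)\delta_{ik}\delta_{jl}$ type expressions, one collects terms to get each $e_\alpha{}^\beta u^c{}_b$ rearranged into $u^a{}_c e_m{}^n$ plus corrections. Grouping by which pair $(e, \text{generator})$ appears gives exactly the stated $q$-commutators $[e_\alpha{}^\beta, u^c{}_d]_{q^{\pm 1}}$ with right-hand sides linear in $u^a{}_b e_m{}^n$.

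Concretely the steps are: (i) record the $R$ and $R^{-1}$ entries; (ii) for each of the four choices of $e$-form ($e_a=e_1{}^1$, $e_b=e_1{}^2$, $e_c=e_2{}^1$, $e_d=e_2{}^2$) and each generator $\alpha,\beta,\gamma,\delta$, evaluate the left side $R^m{}_\alpha{}^a{}_d R^{-1}{}^\beta{}_n{}^d{}_c e_m{}^n u^c{}_b$ and the right side $u^a{}_c e_m{}^n R^m{}_\alpha{}^c{}_d R^d{}_b{}^\beta{}_n$; (iii) subtract to isolate the "naive" term $q^{\pm1} e u - u e$ from the genuinely new mixing terms, identifying the power $q^{\pm1}$ in the $q$-bracket $[x,y]_t := xy - t\,yx$; (iv) simplify using $\mu = 1-q^{-2}$ and the $B_q[M_2]$ relations among $\alpha,\beta,\gamma,\delta$ (which may be needed to put right-hand sides into the canonical ordered form displayed); (v) finally check the inner property: by (\ref{theta}) the calculus is inner with $\theta = e_\alpha{}^\alpha = e_a + e_d$, and $\extd = (1-q^{-2})^{-1}[\theta,(\,)]$, which one verifies is a derivation satisfying $[\theta,u^a{}_b]$ reproducing the known $\extd u^a{}_b$; for $q^2=1$ the prefactor $\mu^{-1}$ blows up and the calculus is not inner, matching the hypothesis $q^2\ne1$.

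The main obstacle I expect is purely bookkeeping: the index contractions in (\ref{Rcalc}) generate many terms and the right-hand sides in the statement are already written in a specific normal-ordered form (e.g. $[e_d,\delta]_q = -\mu\beta e_b + q\mu^2(\delta-\alpha)e_a + \mu\gamma e_c$), so one must repeatedly use the quadratic relations of $B_q[M_2]$ recalled in Section~3.2 to move generators past each other until the expression matches. Getting the signs and the $q$-powers in the brackets consistent across all sixteen relations — and checking that the would-be relations are mutually consistent, i.e. that $\Omega^1$ really is free of rank 4 over $B_q[SU_2]$ with these relations (which is automatic from the transmutation construction but worth a sanity check via $\theta$) — is the only real work; there is no conceptual difficulty since covariance and well-definedness are inherited from the Appendix.
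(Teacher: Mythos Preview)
Your approach is correct and essentially identical to the paper's: specialise the general $R$-matrix formula (\ref{Rcalc}) to the standard $SL_2$ $R$-matrix and read off the sixteen bimodule relations, with the inner property following from (\ref{theta}). The paper's own proof is just two sentences recording the $R$-matrix entries in Hecke normalisation and noting that an extra factor of $q^{-1}$ on the right-hand side of (\ref{Rcalc}) converts to the quantum group normalisation assumed there; your plan fills in exactly the computational detail the paper omits. Two small remarks: first, you will not actually need the $B_q[M_2]$ relations in step (iv), since (\ref{Rcalc}) already produces each relation with a single generator multiplying each $e_m{}^n$ on either side---no reordering of algebra elements is required, only linear inversion to isolate one $e_\alpha{}^\beta u^a{}_b$ at a time; second, the normalisation point (Hecke versus quantum group $R$) is the one place where a wrong convention will silently shift all the $q$-powers, so fix it at the outset as the paper does.
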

\begin{proof} We use the standard $R$-matrix in Hecke normalisation with nonzero entries $R^1{}_1{}^1{}_1=R^2{}_2{}^2{}_2=q$, $R^1{}_1{}^2{}_2=R^2{}_2{}^1{}_1=1$ and $R^1{}_2{}^2{}_1=q-q^{-1}$. We allow for an extra factor of $q^{-1}$ on the right hand side of the bimodule relations to convert $R$ to the quantum group normalisation assumed there. \end{proof}
 
 Note that on the generators the exterior derivative has exactly the same form 
 \[ \extd \begin{pmatrix}\alpha\\ \gamma\end{pmatrix}=\mu^{-1}((q-1)\begin{pmatrix}\alpha\\ \gamma\end{pmatrix} (e_a-q^{-1}e_d)+\mu \begin{pmatrix}\beta\\ \delta\end{pmatrix} e_b)\]
 \[ \extd \begin{pmatrix}\beta\\ \delta\end{pmatrix}=\mu^{-1}((q-1)\begin{pmatrix}\beta\\ \delta\end{pmatrix} (e_d-q^{-1}e_a)+\mu \begin{pmatrix}\alpha\\ \gamma\end{pmatrix} e_c+q\mu^2\begin{pmatrix}\beta\\ \delta\end{pmatrix}e_a)\]
 as for $C_q[SU_2]$ on its generators. However, the algebras are different. One may then verify from these formulae, as a cross-check, that $\extd (\alpha\delta-q^2\gamma\beta)=0$ when computed using the above, the Leibniz rule and the stated bimodule relations. The relations among the left-invariant basis 1-forms are likewise the same as for $C_q[SU_2]$ and hence take the same form in our conventions: $e_a,e_b,e_c$ behave as usual Grassmann variables and
 \[ e_ae_d+e_de_a+\mu e_ce_b=0,\quad e_de_c+q^2e_ce_d+\mu e_ae_c=0\]
 \[ e_be_d+q^2e_de_b+\mu e_be_a=0,\quad e_d^2=\mu e_ce_b.\]
One may verify as a cross-check that the bimodule relations of the proposition are consistent with these 1-form relations, for example when used to compute $e_c^2\delta=0$. Together with the bimodule relations they generate the entire transmutated exterior algebra of $B_q[SU_2]$ with $\extd$ a graded-derivation given by graded commutator with $\theta$.

\begin{corollary} The above calculus localises to a 4-dimensional calculus on $U_q(su_2)$ with 
\[ e_aK=q^{1\over 2}Ke_a,\quad e_b K=q^{-{1\over 2}}Ke_b,\quad e_a x_-=q^{-{3\over 2}}x_- e_a,\quad [e_a,x_+]_{q^{-{1\over 2}}}=Ke_b\]
\[ [e_c,K]_{q^{1\over 2}}=\mu(q-1)x_- e_a,\quad [e_d,K]_{q^{-{1\over 2}}}=\mu(1-q^{-1})x_- e_b,\quad [e_b,x_-]_{q^{-{1\over 2}}}=\mu K e_a\]
\[ [e_c,x_-]_{q^{1\over 2}}=\mu q^{-2}(1-q)K^{-1}x_-^2e_a,\quad [e_d,x_-]_{q^{3\over 2}}=q^{3\over 2}\mu^2x_-e_a+Ke_c+\mu (q^{-1}-1)K^{-1}x_-^2e_b\]
\[ [e_a,x_+]_{q^{1\over 2}}=Ke_b,\quad e_bx_+=q^{3\over 2}x_+e_b,\quad  [e_d,x_+]_{q^{-{1\over 2}}}=\mu K^{-1}(qx_-x_+-x_+x_-)e_b\]
\[ [e_c,x_+]_{q^{-{3\over 2}}}=\mu K e_d+\mu q^{1\over 2}(1-q^{-1})x_-e_b+\mu K^{-1}(x_-x_+-q^{-1}x_+x_-)e_a\]
We define the exterior derivative by the inner form $\extd=[\theta,(\  )]$. The calculus is covariant under the adjoint action of $U_q(su_2)$ and recovers the calculus of \cite{BatMa:non} on `fuzzy $\R^3$'  as $q\to 1$.
\end{corollary}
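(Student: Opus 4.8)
The plan is to transport the $4$-dimensional calculus on $B_q[SU_2]$ of the previous Proposition along the algebra map that presents $U_q(su_2)$ as the localisation of $B_q[SU_2]$ at $\alpha$, namely (as in Section~3)
\[ \alpha=K^2,\quad \beta=q^{-{1\over 2}}(q-q^{-1})Kx_-,\quad \gamma=q^{-{1\over 2}}(q-q^{-1})x_+K,\]
with $\delta$ the remaining entry of $u$, obtained by eliminating $K^{-2}$ through the $q$-Casimir. Since $\Omega^1(B_q[SU_2])$ is \emph{free} as a left module on $e_a,e_b,e_c,e_d$, and any first order differential calculus extends uniquely along an Ore localisation — one is forced to set $\extd(s^{-1})=-s^{-1}(\extd s)s^{-1}$ and the bimodule structure is then determined by the Leibniz rule — the forms $e_\alpha{}^\beta$ together with $K^{\pm1},x_\pm$ automatically generate a calculus on $U_q(su_2)$, with $\theta=e_a+e_d$ and the inner formula $\extd=[\theta,(\ )]$ inherited verbatim. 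So nothing new is constructed; the task is to rewrite the twelve bimodule relations, and I would do this in three steps.

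First, substitute the formulae above into each relation of the Proposition, using the $q$-commutation of the $e$'s with $\alpha$ to move powers of $K^2$ to one side. Second, \emph{descend from $K^2$ to $K$}: each left-invariant form $e_\alpha{}^\beta$ is a weight vector for the residual Cartan action $K(\ )K^{-1}$, so a relation $e\,K^2=q^m K^2 e$ forces $e\,K=q^{m/2}K e$ (using invertibility of $K$ and semisimplicity of $K$-conjugation, legitimate over $\C[[\hbar]]$ with $K=q^{H/2}$); this produces the scalars $q^{\pm1/2},q^{\pm3/2}$ and lets one cancel the common factors $q^{-1/2}(q-q^{-1})$ carried by $\beta,\gamma$. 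Third, collect: for instance $[e_a,\gamma]_q=\mu\alpha e_b$ becomes $[e_a,x_+]_{q^{\pm1/2}}=Ke_b$ after clearing $q^{-1/2}(q-q^{-1})$ and moving one $K$ through, while the Casimir-dependent entry $\delta$ is responsible for the terms containing $x_-x_+-q^{-1}x_+x_-$ and $qx_-x_+-x_+x_-$ in the $e_c,e_d$ relations. Running this over all relations yields exactly the list stated; the formula for $\extd$ may then be re-derived as a cross-check from $[\theta,(\ )]$ and the new bimodule relations, or transported directly from the $\extd$ formula on $B_q[SU_2]$.

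For covariance, the calculus on $B_q[SU_2]$ is $C_q[SL_2]$-covariant for the adjoint (conjugation) coaction on $u$; under the localisation isomorphism and the self-duality of $B_q[SU_2]$ with $U_q(su_2)$, this coaction corresponds to the quantum adjoint action of $U_q(su_2)$ on itself, so the localised calculus is covariant under that action — and indeed the listed relations are visibly assembled from pieces homogeneous for $K(\ )K^{-1}$, consistent with this. For the classical limit I would set $q=e^{\hbar/2}$, take $\lambda$ of order $\hbar$, write $K=q^{H/2}$, pass to Cartesian generators $x_i$ obeying (\ref{fuzzy}) by the usual rescaling of $H,x_\pm$, identify the leading ($O(1)$ or $O(\hbar)$, as appropriate) parts of suitable linear combinations of $e_a,e_b,e_c,e_d$ with $\extd x_1,\extd x_2,\extd x_3$ and with the time form $\theta$ of the $4$-dimensional calculus of \cite{BatMa:non}, and check that the $O(\hbar)$ part of each relation above reproduces the fuzzy $\R^3$ bimodule relation, with $\extd=[\theta,(\ )]$ matching the exterior derivative of \cite{BatMa:non} in the limit.

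I expect the main obstacle to be bookkeeping rather than conceptual: propagating the Hecke-versus-quantum-group normalisation factor $q^{-1}$, the $q^{\pm1/2}$ produced by the $K^2\to K$ descent, and the powers of $\mu=1-q^{-2}$ consistently through a dozen relations; and, in the classical limit, pinning down precisely which combinations of the $e$'s degenerate to $\extd x_i$ and to $\theta$, and at which order, so that the match with \cite{BatMa:non} comes out clean.
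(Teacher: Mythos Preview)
Your approach is correct and matches the paper's: localise, take square roots for the $K^2\to K$ step (the paper phrases this as ``factorise'' and, rather than deriving forward, verifies backward that the stated relations imply those of Proposition~5), then check covariance via the adjoint coaction/action correspondence and the $q\to 1$ limit via exponentiated generators. One point to flag: the definition $\extd=[\theta,(\ )]$ here \emph{drops} the $\mu^{-1}$ normalisation of Proposition~5 precisely so that the classical limit exists on the new generators, so it is not quite ``inherited verbatim'' as you write.
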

\begin{proof} The first two relations follow easily as the square root of the relations with $K^2=\alpha$. That the other relations similarly factorise is not obvious but can be done as stated; one may verify from the stated relations that these imply the desired  $B_q[SU_2]$  bimodule relations.
The exterior derivative is now defined {\em without} the $\mu^{-1}$ normalisation in order to have a limit as $q\to 1$ in the new generators. Then one can compute relations such as
\[ \extd K.K=(1+\lambda)K\extd K+\lambda K^2\theta,\quad \lambda=q^{1\over 2}(1-q^{-{1\over 2}})^2\]
and more complicated bimodule relations with $\extd x_\pm$.  The displayed relation has the same form as one may compute for an exponentiated generator of fuzzy $\R^3$ using formulae in \cite{Ma:time} and an appropriate matching of parameters. Similarly for the other relations. One can rework this to formally derive the fuzzy $\R^3$ calculus relations in a similar manner to our treatment of the less familiar bicrossproduct calculus in the next section. The covariance under the adjoint action is by evaluation against the right adjoint coacition and covariance properties of the quantum killing form used in the identification of a localisation of $B_q[SU_2]$ with $U_q(su_2)$, see \cite{Ma:book}.   \end{proof}

There is actually a larger covariance of this calculus, namely under an action of the quantum double $D(U_q(su_2))$. This is explained in the next section and proven in the Appendix.
 
\begin{propos} The above calculus on $B_q[SU_2]$ descends to a $C_q[SU_2]$-covariant 3-dimensional calculus on the $q$-fuzzy-sphere with the additional relation
\[ (t+t^{-1})\theta=q^{-1}(1+q^{-1})(\alpha e_d+\delta e_a- q^{-1}\beta e_b - q \gamma e_c).\]
\end{propos}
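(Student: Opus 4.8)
The plan is to present the $q$-fuzzy sphere as a quotient of $B_q[SU_2]$ and then push the calculus of Proposition~4.3 down along it. By the time-slice proposition just proved, the $q$-fuzzy sphere (equivalently the Podle\'s sphere, with $s=\imath t$) is $\bar B:=B_q[SU_2]/I$, where $I$ is the two-sided ideal generated by $r:=\tr_q(u)-(t+t^{-1})=q^{-1}\alpha+q\delta-(t+t^{-1})$, with $\tr_q(u)$ central in the algebra. For any surjection of algebras $B\twoheadrightarrow\bar B=B/I$ equipped with a first order calculus there is a canonical induced first order calculus $\Omega^1(\bar B)=\Omega^1(B)/\CN$ with $\CN=I\,\Omega^1(B)+\Omega^1(B)\,I+B\,(\extd I)\,B$ and $\extd$ the obvious quotient; this is the calculus we take on $\bar B$. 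Concretely $\Omega^1(\bar B)$ is again spanned over $\bar B$ by $e_a,e_b,e_c,e_d$ subject to the bimodule relations of Proposition~4.3 read in $\bar B$, together with the single new $1$-form relation $\extd r=0$, i.e. $\extd(q^{-1}\alpha+q\delta)=0$; it is this relation that cuts the rank of the free module from $4$ down to $3$.

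Next I would make $\extd r=0$ explicit. Using the inner form $\extd=(1-q^{-2})^{-1}[\theta,(\ )]$ with $\theta=e_a+e_d$ and centrality of $\tr_q(u)$ in $B_q[SU_2]$ as an algebra, we have $\extd\tr_q(u)=(1-q^{-2})^{-1}(\theta\,\tr_q(u)-\tr_q(u)\,\theta)$, where $\tr_q(u)\,\theta$ is already in generator-then-form order while $\theta\,\tr_q(u)$ must be normal-ordered using the bimodule relations $[e_a,\alpha]_q$, $[e_d,\alpha]_{q^{-1}}$, $[e_a,\delta]_{q^{-1}}$, $[e_d,\delta]_q$ of Proposition~4.3 (equivalently, read off $q^{-1}\extd\alpha+q\,\extd\delta$ from the explicit $\extd$-formulae displayed after that proposition). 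Setting the result to zero gives a $\bar B$-linear identity among $\alpha e_a,\alpha e_d,\delta e_a,\delta e_d,\beta e_b,\gamma e_c$. Finally I would use the algebra relation $q^{-1}\alpha+q\delta=t+t^{-1}$ to eliminate the two diagonal terms, substituting $\alpha e_a=\big(q(t+t^{-1})-q^2\delta\big)e_a$ and $\delta e_d=\big(q^{-1}(t+t^{-1})-q^{-2}\alpha\big)e_d$. The scalar pieces assemble into $(t+t^{-1})\theta$, and after collecting the rest (using $\mu=1-q^{-2}$, so that the coefficients of $\alpha e_d$ and of $\delta e_a$ turn out to coincide and $\mu/(q-1)=q^{-1}(1+q^{-1})$) one is left with precisely
\[ (t+t^{-1})\theta=q^{-1}(1+q^{-1})(\alpha e_d+\delta e_a- q^{-1}\beta e_b - q \gamma e_c).\]
As a cross-check one can reinsert this relation and verify it is consistent with the relations among the $e$'s quoted in Section~4, exactly as was done there for the bimodule relations themselves.

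For covariance, the generator $r$ of $I$ is $\tr_q(u)-(t+t^{-1})$, and $\tr_q(u)$ is (up to normalisation) the braided trace, hence $C_q[SU_2]$-invariant under the adjoint coaction, while scalars are invariant; so $I$ is a comodule ideal. Since $\extd$ is a comodule map, $\CN$ is both a sub-bimodule and a subcomodule, so $\Omega^1(\bar B)$ is again a $C_q[SU_2]$-covariant calculus, now $3$-dimensional. Passage to the full exterior algebra is automatic, the higher relations among $e_a,e_b,e_c,e_d$ being unchanged from $\Omega(C_q[SU_2])$ and $\extd$ remaining the graded commutator with $\theta$.

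The main obstacle I expect is the bookkeeping in the middle step: tracking the quantum-group normalisation factor of $q^{-1}$ attached to the Proposition~4.3 bimodule relations while normal-ordering $\theta\,\tr_q(u)$, and checking that after the diagonal substitution nothing survives except $(t+t^{-1})\theta$ and the four stated off-diagonal/cross terms with the advertised coefficients --- in particular that the $q^2\mu^2\alpha e_a$ contributions coming from $[e_a,\delta]_{q^{-1}}$ and from $[e_d,\delta]_q$ cancel each other, so that only the $\alpha e_a$ coming from $[e_a,\alpha]_q$ remains and is then absorbed into $(t+t^{-1})\theta$ via the algebra relation. A secondary point to confirm is that $\CN$ forces no further $1$-form relations beyond $\extd r=0$ --- i.e. that differentiating the new relation, or multiplying it by generators, yields only consequences already implied by Proposition~4.3 --- so that the calculus is genuinely $3$- and not lower-dimensional.
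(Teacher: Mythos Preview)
Your proposal is correct and follows essentially the same line as the paper: compute $\extd\tr_q(u)$ using the explicit formulae after Proposition~5, set it to zero in the quotient, and recognise the resulting relation as $\tr_q(u)\theta=\text{RHS}$ before substituting $\tr_q(u)=t+t^{-1}$; covariance is then immediate from invariance of $\tr_q(u)$. Your write-up is more explicit than the paper's terse proof about the general quotient-calculus construction $\Omega^1(B)/\CN$ and about the bookkeeping, and you correctly flag the residual check that no further $1$-form relations are forced (the paper simply asserts the resulting calculus is $3$-dimensional without elaborating).
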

\begin{proof} We recall that the quotient is $\tr_q(u)=t+t^{-1}$, a constant, and that here $\tr_q(u)=q^{-1}\alpha+q\delta$. We compute  $\extd \tr_q(u)=0$ using the explicit formulae for $\extd$ above, and find for $q\ne 1$ that  this becomes $\tr_q(u)\theta$ equal to the right hand side of the stated expression, giving the result.  With this additional relation $\Omega^1$ pulled back to the $q$-fuzzy sphere is no longer a free module. Note that all constructions are $C_q[SU_2]$ covariant under the adjoint coaction on $B_q[SU_2]$ which descends to a coaction on the $q$-fuzzy sphere as $\tr_q(u)$ is invariant. Note also that the constraint was derived for $q\ne 1$ but the resulting calculus makes sense also at $q=1$ as an extension of the classical calculus on the time slice of the hyperboloid.  \end{proof}

Note that covariant calculi on the Podle\'s spheres were studied by other means in \cite{Pod3} and the above is presumably equivalent to the 3-dimensional calculus for the non-standard Podle\'s sphere found there, obtained now from a general R-matrix construction. On the other hand, our methods apply to all the $B_q[G]$ to give a calculus on $U_q(g)$ and hence $U(g)$ for all complex semisimple $g$ and quantisations of their associated coadjoint orbits.

\section{$\Omega(B_q[SU_2])$ as a cotwist and 3D quantum gravity}

{\bf 5.1} The Appendix provides a different route to constructing the above  calculus $\Omega(B_q[G])$, see Proposition~10, which has the merit of exhibiting a larger covariance quantum group $C_q[G_\C]$, where $G_\C$ is the complexification of $G$. We start with the quantum group $C_q[G]^{\rm op}$ equipped with a bicovariant calculus $\Omega(C_q[G]^{\rm op}):=\Omega(C_q[G])^{\rm op}$ obtained from one on $C_q[G]$. We regard the bicovariance of the calculus as covariance under $C_q[G]\tens C_q[G]^{\rm op}$. It is known \cite{Ma:euc} that there is a certain cocycle $F$ on this larger quantum group which Drinfeld-cotwists it into $C_q[G_\C]$ and which in the process cotwists the covariant algebra $C_q[G]^{\rm op}$ into $B_q[G]$. See \cite{Ma:book} for an exposition. One might expect, and this is proven in the appendix, that this process also cotwists $\Omega(C_q[G]^{\rm op})$ into $\Omega(B_q[G])$.

In the case of $G=SU_2$ we regard $C_q[SU_2]^{\rm op}$ as a unit 3-sphere in $q$-Euclidean space with a coaction of $C_q[SU_2]\tens C_q[SU_2]^{\rm op}$ regarded as  a q-deformed covering $C_q[\tilde{SO_4}]$ of $SO_4$, see \cite{Ma:euc}. The cotwist or `quantum Wick rotation' now changes this covariant system to $B_q[SU_2]$ regarded as a unit 3-hyperboloid in $q$-Minkowski space with covariance quantum group $C_q[\tilde{SO_{1,3}}]$, where we regard $(SU_2)_\C=SL(2,\C)$ as a covering of $SO_{1,3}$. The standard 4D bicovariant calculus on $C_q[SU_2]$ adapted to $C_q[SU_2]^{\rm op}$ becomes the calculus on $B_q[SU_2]$ in Proposition~5. Note that these explanations and notations are at an algebraic level that works over any field and should not be confused with $*$-algebra structures on our objects which have to be found separately to fully justify the real forms indicated. This explains the point of view on the $q$-geometry in earlier sections.

{\bf 5.2} As explained in \cite{MaSch}, this cotwisting takes on a meaning in 3D quantum gravity as quantum Born reciprocity, where an interchange of position and momentum in the interpretation of the 3D quantum gravity, and an interchange of the Planck length with the cosmological length scale in the model, is essentially (at an algebraic level) implemented by the cotwist from the $q$-Euclidean  system to the $q$-Minkowski one. Thus the `local picture' or model spacetime in 3D quantum gravity as it naturally emerges consists of particles on $B_q[SU_2]$ with the quantum double or $C_q[\tilde{SO_{1,3}}]$ as isometry quantum group, but its `semidual' or Born-reciprocal model consists of particles on $C_q[SU_2]^{\rm op}$ with $C_q[\tilde{SO_4}]$ isometry. We have explained in Section~4 that the calculus on $B_q[SU_2]$ becomes in a certain limit a calculus on $U(su_2)$ or `fuzzy $\R^3$' and this is known to have a quantum double $D(U(su_2))$ symmetry as the isometry quantum group for 3D quantum gravity without cosmological constant. This emerges as the coaction of  $C_q[\tilde{SO_{1,3}}]$  can be viewed as an action of $D(U_q(su_2))$, which becomes $D(U(su_2))$ in this limit. 

It is also shown in \cite{MaSch} that on the other side of the cotwist or semidualisation we have a different scaling limit in which $C_q[SU_2]^{\rm op}$ becomes the 3D version of the bicrossproduct spacetime\cite{MaRue} and the quantum enveloping algebra corresponding to $C_q[\tilde{SO_4}]$ becomes the bicrossproduct quantum Poincar\'e group $U(su_2)\rlbicross C[SU_2^\star]$  in  \cite{Ma:pla,Ma:book}, where $SU_2^\star$ is a certain solvable group of upper triangular matrices.  Indeed $U_q(su_2)\tens U_q(su_2)^{\rm cop}$ is isomorphic to the bicrossproduct $U_q(su_2)\rlbicross C_q[SU_2^\star]$ and this then becomes the expected bicrossproduct Poincar\'e group in the limit $q\to 1$. This is also why the latter is quasitriangular. We now complete this picture. We work with $C_q[SU_2]$ not its opposite algebra as this boils down to a choice of conventions. It is known that the bicrossproduct spacetimes do not have a quantum Poincar\'e covariant calculus of classical dimensions but this can be remedied with an extra dimension\cite{Sitarz}.

\begin{propos} In the limit whereby $C_q[SU_2]$ becomes the 3D bicrossproduct spacetime, its 4D bicovariant calculus becomes the natural 4D quantum-Poincar\'e-covariant calculus on the bicrossproduct spacetime.
\end{propos}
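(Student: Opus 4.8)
The plan is to perform, at the level of the full differential graded algebra, the scaling limit of $C_q[SU_2]$ that degenerates it to the 3D bicrossproduct spacetime $\R^{1,2}_\lambda$. First I would fix that limit explicitly as in \cite{MaSch}: introduce rescaled matrix generators of $C_q[SU_2]$ (schematically $a=e^{\lambda p_0}$ up to normalisation with $b,c$ of order $\lambda$ becoming the spatial coordinates, so that their commutators with $\log a$ acquire the bicrossproduct factor $\imath\lambda$), set $q=e^{-\lambda/2}$ in the relevant convention, and record that as $\lambda\to0$ the relations of $C_q[SU_2]$ collapse to those of $\R^{1,2}_\lambda$ while the covariance quantum group $C_q[SU_2]\tens C_q[SU_2]^{\rm op}$ (dually $U_q(su_2)\tens U_q(su_2)^{\rm cop}\cong U_q(su_2)\rlbicross C_q[SU_2^\star]$ by \cite{MaSch}) degenerates to the bicrossproduct quantum Poincar\'e group $U(su_2)\rlbicross C[SU_2^\star]$.

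Next I would take Woronowicz's 4D bicovariant calculus in the explicit generator form used in Proposition~5 --- the four left-invariant forms $e_a,e_b,e_c,e_d$, the bimodule relations $[e_\bullet,\text{generator}]_{q^{\pm1}}=\cdots$, the exterior derivative on the matrix generators, and the exterior-algebra relations among the $e_\bullet$ --- and apply to it the same substitution together with an adapted rescaling of the $e_\bullet$. As already for Corollary~6, the naive limit is degenerate because of the ubiquitous factor $\mu=1-q^{-2}\to0$; one therefore has to pass to a new basis of 1-forms (some scaled by powers of $\lambda$, a distinguished combination surviving as $\extd p_0$, and $\theta=e_a+e_d$ surviving as the extra `fourth' 1-form) chosen so that a finite and non-degenerate limit exists. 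From this I would read off the limiting bimodule relations between the bicrossproduct coordinates and the four 1-forms, the limiting $\extd$ --- inner by $\theta$, rescaled as in Corollary~6 so that it survives $\lambda\to0$ and produces the characteristic second-order `Laplacian' term against $\theta$ --- and the limiting wedge relations, in which $\extd x_0,\extd x_1,\extd x_2$ become essentially classical while $\theta$ plays the role of Sitarz's extra generator.

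Finally I would compare the resulting $\Omega^1$ and $\Omega$ with the 4D quantum-Poincar\'e-covariant calculus on the 3D bicrossproduct spacetime of \cite{Sitarz}, matching normalisations and basis. Since by the first step the limiting calculus is automatically covariant under the bicrossproduct quantum Poincar\'e group (covariance being a closed condition, preserved under the limit of the defining coactions) and since the 4D covariant calculus is essentially unique, the two must coincide. The main obstacle I anticipate is the second step: pinning down the correct $\lambda$-rescaling of the invariant 1-forms so that the limit is simultaneously finite and still spans a 4-dimensional bimodule --- the Woronowicz relations are `all of order $\mu$', so one must track the cancellations carefully --- together with the bookkeeping needed to line up with Sitarz's conventions. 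Checking covariance and $\extd^2=0$ in the limit is then a routine cross-check of exactly the kind already illustrated for $\Omega(B_q[SU_2])$ and for fuzzy $\R^3$ above.
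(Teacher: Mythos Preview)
Your plan matches the paper's: carry the 4D bicovariant calculus through the contraction of $C_q[SU_2]$ to the bicrossproduct spacetime and identify the result. The execution turns out to be simpler than you anticipate in one key respect: the left-invariant basis $\{e_a,e_b,e_c,e_d\}$ needs \emph{no} rescaling at all. The paper rescales only the coordinate generators --- setting $b=\tfrac{q\mu}{\imath\lambda}q^{-{1\over 2}}x_-$, $c=\tfrac{q\mu}{\imath\lambda}q^{{1\over 2}}x_+$ and interpreting $a$ as $q^{z/(\imath\lambda)}$, with $\lambda$ kept as an independent parameter rather than tied to $q$ --- and redefines $\extd=(\imath\lambda)^{-1}[\theta,\ ]$ with $\theta=\imath(e_a+e_d)$, i.e.\ dropping the $\mu^{-1}$. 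With these choices the bimodule relations have a direct $q\to 1$ limit in which $\extd z=\imath(e_a-e_d)$, $\extd x_-=\imath e_c$, $\extd x_+=\imath e_b$, and the four $e_\bullet$ remain a basis; so the ``main obstacle'' you flag --- finding a nondegenerate rescaling of the invariant 1-forms --- largely evaporates. The paper also does not invoke uniqueness of the covariant calculus: it simply writes the limiting relations out explicitly and recognises them as the Sitarz-type calculus on bicrossproduct spacetime.
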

\proof Taking $C_q[SU_2]$ with its usual matrix of generators $a,b,c,d$ and 4D bicovariant calculus with basis $\{e_a,e_b,e_c,e_d\}$ as above, we work in a `patch' where $a$ is invertible and think of this as an exponentiated generator $q^{z\over \imath\lambda}$ (but we work with $a$ to start with). We write $b={q\mu \over \imath\lambda} q^{-{1\over 2}}x_-$ and $c={q\mu\over \imath \lambda} q^{{1\over 2}}x_+$ in terms of the other two spacetime generators $x_\pm={1\over 2}(x\pm \imath y)$ with $x,y$ self-adjoint when $q$ is real and a real scaling parameter $\lambda$.  The relations in these terms are now $[x,y]=0$ and exponentiated versions $xa=qax$, $ya=qay$ of the bicrossproduct spacetime relations $[x,z]=\imath\lambda x$ and $[y,z]=\imath\lambda y$. Note that $d$ is fixed by the $q$-determinant relations and is not regarded as a generator in this patch.  These matters are all explained in \cite{MaSch} where $x,y,z$ have dimensions of length and the parameter $\lambda$ is the Planck length. We now compute in $\Omega^1(C_q[SU_2])$ using conventions as in \cite{Ma:ric} except that we define the exterior derivative on functions as $\extd=(\imath\lambda)^{-1}[\theta,\ ]$ where $\theta=\imath(e_a+e_d)$, i.e.  {\em without} the $\mu^{-1}$ factor so as to have a limit on $z,x_\pm$ as $q\to 1$. We use conventions so that $\theta$ is self-adjoint when $q$ is real. Note also that  $z^*=z$ in the limit. The nonzero bimodule commutation relations in this limit become
\[ [e_a,z]=[e_b,x_-]=[e_c,x_+]=\imath\lambda e_a,\quad [e_d,(z,x_-,x_+)]=\imath\lambda(-e_d,e_c,e_b)\]
where the relations involving $z$ actually arise in an exponentiated form, $a^{-1}e_aa=qe_a+O(\mu)$, $a^{-1}e_da=q^{-1}e_d+O(\mu)$ prior to taking the limit, which we interpret as shown. Since we are working here algebraically the correct statement is that there is a calculus as shown on the algebra generated by $z,x_\pm$ or $z,x,y$ and commutation relations which extend in the expected way to any completion allowing exponentials. In our calculus we compute $\extd z=\imath(e_a-e_d),\quad \extd x_-=\imath e_c,\quad \extd x_+=\imath e_b$. Moving to the self-adjoint generators $x_i=x,y$ we have  the equivalent presentation
\begin{equation}\label{bicrosscalc} [\extd x_i,x_j]=\imath\lambda \delta_{ij}(\theta+ \extd z),\quad[\extd x_i,z]=0,\quad [\extd z,x_i]=-\imath\lambda \extd x_i,\quad [\extd z,z]=\imath\lambda \theta.\end{equation}
and $\extd=(\imath\lambda)^{-1}[\theta,\ ]$ as the 4D Poincar\'e-covariant calculus on 3D bicrossproduct spacetime, cf \cite{Sitarz} in other dimensions, but now derived from the bicovariant calculus on $C_q[SU_2]$. Note that $\theta'=\theta+\extd z$ obeys $[\theta',x_i]=0$ and $\theta' z=(z+\imath \lambda)\theta'$ and is therefore more natural to work with than $\theta$ in computations.  \endproof

To complete the picture we define partial derivatives by 
\[ \extd f=\sum_{i=1,2}(\del^if )\extd x_i +(\del^zf) \extd z+(\del^0f)\theta',\quad \theta'=\theta+\extd z\]
as operators on $f$ in the bicrossproduct spacetime algebra. Note that the $\imath z,\imath\extd z,-\theta$ variables in (\ref{bicrosscalc}) form exactly the same algebra as the polar coordinates algebra of $\hat r,\extd \hat r,\theta$ on fuzzy $\R^3$ in \cite{Ma:time,FreMa}, hence we can read off $\extd g(z)$ from formulae there as 1st and 2nd order finite differences. Meanwhile, the $x_i,\extd x_i,\theta'$ algebra behaves very simply as $\theta'$ is central there and has the form of an extended classical calculus
\begin{equation}\label{jetcalc}
\extd f(x,y)=\sum_i {\del f\over \del x_i}\extd x_i+{1\over 2}\sum_i{\del^2 f\over\del x_i^2}\theta'.\end{equation}
Combining these observations immediately gives the result on normal ordered functions $f(x,y)g(z)$ (i.e. keeping the $z$ variable to the right),
\begin{equation}\label{partialbic} \del^i(fg)=({\del f\over \del x_i})g,\quad \del^z(fg)=f\, {g(z)-g(z-\imath\lambda)\over\imath\lambda},\end{equation}
\begin{equation}\label{laplbic}(\imath\lambda)^{-1} \del^0(fg)={1\over 2}\left(\sum_i {\del^2 f\over \del x_i^2}\right)  g(z+\imath\lambda)+{1\over 2}f\, {g(z+\imath\lambda)+g(z-\imath\lambda)-2g(z)\over (\imath\lambda)^2}.\end{equation}
We see that $2(\imath\lambda)^{-1}\del^0$ has the expected classical/finite difference form of the Laplacian on the bicrossproduct spacetime. One can in fact take it as a definition of that.  Its value $-k^2e^{-\omega\lambda}-({\sinh(\omega\lambda/2)\over\lambda/2})^2$ on plane waves $e^{\imath \sum_i k_ix_i}e^{\imath\omega z}$ in bicrossproduct spacetime coincides up to adjustment for the signature with the value previously computed in the noncovariant calculus for bicrossproduct spacetimes, see \cite{AmeMa}.  This emergence of a natural Laplacian is the same phenomenon of `spontaneous evolution' as in \cite{Ma:time} for fuzzy $\R^3$. It is already known that the partial derivative in the $\theta$ direction on $C_q[SU_2]$ gives its Laplace-Beltrami operator and we accordingly define the partial derivative in the $\theta$ direction on $B_q[SU_2]$ as its Laplace-Beltrami operator for the calculus of Proposition~5.

{\bf 5.3} Note finally that from the projector approach of the present paper one can already construct the tautological  line bundle and monopole on all the various deformed spheres  and obtain other vector bundles by tensor product and direct sum. Using the differential calculus one may then compute the explicit form of the monopole connection. However, a  `quantum principal bundle' analogous to the classical Hopf fibration is not necessarily  known. In the case of  the Podle\'s sphere, a suitable
coalgebra bundle \cite{BrzMa:geo} is known with the Podle\'s sphere appearing as an  invariant subalgebra of $C_q[SU_2]$ as  `base' of  this bundle. If this can be naturally adapted to our point of view as a constant-time slice in $B_q[SU_2]$ one may then be able to
take a limit $q\to 1$ and $t\to 1$ with $\lambda$ fixed and thereby obtain a coalgebra bundle on fuzzy-spheres. We may also consider the standard $q$-monopole on $C_q[SU_2]$ as `total space' and consider asymptotically what happens in its limit to bicrossproduct spacetime. Finally, one could use these constructions to construct quantum particle states or representations of the isometry quantum group for the relevant sector of 3D quantum gravity, as explained in \cite{MaSch} via quantum Fourier transform and quantum Born reciprocity. Clearly, a natural calculus on $B_q[SU_2]$ or $U_q(su_2)$ has many possible applications but these are some directions for further work.

\appendix
\section{Construction of exterior algebras by transmutation}

{\bf A.1} In this section we will need a little Hopf algebra theory and refer to \cite{Ma:book} for the methods. The theory works over any field $k$, with $k=\C$ of interest in the body of the paper. Briefly, a Hopf algebra $A$ mean an algebra over a field $k$ equipped with a coproduct $\Delta:A\to A\tens A$, counit $\eps:A\to k$ and antipode $S:A\to A$ obeying some axioms. We let $A^+=\ker\eps$ denote the elements killed by $\eps$. We use the `Sweedler notation' $\Delta a=a\o\tens a\t$. A differential calculus $(\Omega^1,\extd)$ over $A$ is left translation  covariant if the coproduct extends to a well-defined map $\Delta_L: \Omega^1\to A\tens \Omega^1$ by $\Delta_L(a\extd b)=a\o b\o \tens a\t \extd b\t$. It is right-covariant if it extends to a well-defined map $\Delta_R:\Omega^1\to \Omega^1\tens A$ by $\Delta_R(a\extd b)=a\o\extd b\o\tens a\t b\t$, and bicovariant if both. In the latter case there is a natural `minimal' exterior algebra $\Omega=\oplus_i\Omega^i$ due to Woronowicz \cite{Wor:dif} generated by $\Omega^0=A$ and $\Omega^1$ and with $\Omega^i$ for $i>1$ defined by a certain `skew-symmetrization' of $\Omega^1$ with respect to a certain `quantum double' braiding (we assume  for this that $S$ is invertible). There is also a `maximal prolongation' exterior algebra with just the minimal relations implied by applying $\extd$ to the relations at first order. There are also potentially intermediate bicovariant options such as using only the quadratic  degree 2 relations in the Woronowicz construction. It can be shown that  $\Omega$ is a $\Z_2$-graded or `super' Hopf algebra with
\[ \Delta|_{\Omega^0}=\Delta,\quad \Delta|_{\Omega^1}=\Delta_L+\Delta_R\]
extended as a $\Z_2$-graded algebra homomorphism. 

Also in the bicovariant case, one can show that $\Omega=A\Lambda$ in the sense of an algebra factorisation where $\Lambda=\oplus\Lambda^i$ is the subalgebra of (say) left-invariant differential forms. It forms a braided-Hopf algebra with additive coproduct on the generating space $\Lambda^1$, and $\Omega$ is its `super bosonisation'.  In practical terms,  $\Lambda^1$ forms a right $A$-crossed module in the sense of compatible right actions and coactions of $A$ (in such a way as to form a comodule for the quantum codouble of $A$ appropriately defined). The coaction is the restriction $\Delta_R|_{\Lambda^1}$ and we denote it explicitly by $\Delta_R(v)=v\bo\tens v\bt$ for $v\in\Lambda^1$, and the action $\ra$ is defined by the right adjoint action $v\ra b=Sb\o v b\t$ in $\Omega$. One may use the Maurer-Cartan form $\omega:A^+\to \Lambda^1$ defined by $\omega(a)=Sa\o\extd a\t$ to identify $\Lambda^1\cong A^+/I$ where $I=\ker\omega$. In this way bicovariant calculi are in 1-1 correspondence with $\Ad_R$-stable right ideals in $A^+$, where $\Ad_R(a)=a\t\tens (Sa\o)a\th$ using the numerical  notation extended to iterated coproducts.  In these terms the action on $\Lambda^1$ is descended from right multiplication on $A^+$ and the coaction is descended from $\Ad_R$.
 
{\bf A.2} Let $\pi:A'\to A$ be a Hopf algebra map and $A$  coquasitriangular in the sense of a map $\CR:A\tens A\to k$ obeying some standard axioms \cite{Ma:book}. Then the categorical definition of the transmutation \cite{Ma:bg} can be unwound to the explicit result of a new algebra $\underline{A'}$ with modified product
\[ a\bullet b=a\bo b\t \CR(a\bt\tens S\pi(b\o))\]
which, with unchanged coalgebra structure forms `braided group' or Hopf algebra in the braided category $\CM^A$ via the pushed out adjoint coaction $\Ad_\pi(b)=b\t\tens \pi((Sb\o)b\th)$ on $A'$. If $A'$ is a $\Z_2$-graded Hopf algebra then one similarly has $\und{A'}$ a $\Z_2$-graded Hopf algebra in $\CM^A$ with its usual braiding induced by $\CR$ or a Hopf algebra in $\CM^A$ with a modified braiding in which $-1$ factors appear according to the grading. We call such an object a super braided group in the category $\CM^A$.

{\bf A.3} We apply these remarks to $\pi:\Omega\to A$ as a map of super-Hopf algebras in which $A$ has degree 0 (an ordinary Hopf algebra), $\pi|_{\Omega^0}=\id$ and $\pi=0$ on higher degrees. The $\Omega^0=A$ subalgebra transmutes as usual to $\und\Omega^0=\und A$ defined as above with $\pi=\id$. 

\begin{propos} The transmutation of a bicovariant calculus $\Omega$ on coquasitriangular Hopf algebra $(A,\CR)$ is a super braided group $\und\Omega$ which provides a differential graded algebra on $\und A$ (the transmutation of $A$) generated by $\und A$ and $\Lambda$ with relations
\[ \CR(v\bt\tens a\o)v\bo\bullet a\t=a\o\bullet (v\ra a\t),\quad\forall a\in
\und A,\ v\in \Lambda^1,\]
unchanged relations among elements of $\Lambda$ and unchanged differential $\und\extd=\extd$.
\end{propos}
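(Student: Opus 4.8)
The plan is to feed the projection $\pi:\Omega\to A$ into the general transmutation result~A.2 and then read off the explicit relations using the algebra factorisation $\Omega=A\Lambda$ of~A.1. First I would note that $\pi$ (the identity on $\Omega^0=A$ and zero on $\bigoplus_{i>0}\Omega^i$) is a morphism of $\Z_2$-graded Hopf algebras: its kernel $\bigoplus_{i>0}\Omega^i$ is a two-sided ideal (as $\Omega^i\Omega^j\subseteq\Omega^{i+j}$), a coideal (as $\Delta$ is degree-additive), and $\pi$ is $S$-equivariant. The super-version of~A.2 then produces $\und\Omega$ as a super braided group in $\CC=\CM^A$ with unchanged coalgebra and grading, product $a\bullet b=a\bo b\t\,\CR(a\bt\tens S\pi(b\o))$ where $a\bo\tens a\bt=\Ad_\pi(a)$, and degree-$0$ subalgebra $\und\Omega^0=\und A$. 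Since the underlying graded vector space and all coalgebra data are untouched, $\und\extd:=\extd$ still squares to zero and is covariant; that it remains a graded $\bullet$-derivation follows from the braided-monoidality of the transmutation construction applied to the differential graded algebra $\Omega$, and can also be checked directly on generators once the cross relations below are available.

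Second, I would show that the relations among elements of $\Lambda$ are unchanged, that is, $\bullet|_{\Lambda}$ equals the original product. The key identity is $\pi(x\o)\tens x\t=\Delta_L x=1\tens x$ for $x\in\Lambda$, which holds because $(\pi\tens\id)\circ\Delta=\Delta_L$ and $\Lambda$ consists of left-invariant forms. Hence for $x,y\in\Lambda$ the factor $\CR(x\bt\tens S\pi(y\o))$ in $x\bullet y$ reduces to $\CR(x\bt\tens 1)=\eps(x\bt)$, and the counit property of the coaction $\Ad_\pi$ gives $x\bullet y=xy$. A short computation with the iterated coaction also shows $\Ad_\pi|_{\Lambda^1}=\Delta_R|_{\Lambda^1}$, with values in $\Lambda^1\tens A$; this identifies the notation $v\mapsto v\bo\tens v\bt$ of the statement with the right coaction on left-invariant $1$-forms, and shows $\Lambda\hookrightarrow\und\Omega$ is a sub-super-braided-Hopf algebra whose relations are those already holding in $\Omega$, hence in $\Omega(C_q[G])$.

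Third --- the computational heart --- is the cross relation between $\und A$ and $\Lambda^1$. Using $\pi|_{\Omega^0}=\id$, the identity $(\pi\tens\id)\Delta=\Delta_L$, and $\Ad_\pi|_{\Omega^0}=\Ad_R$, one computes $a\bullet v=av$ (the original product: the $\CR$-factor collapses to $\eps(a\bt)$ and then the counit of $\Ad_R$ returns $a$), and $v\bullet a=v\bo\cdot a\t\,\CR(v\bt\tens Sa\o)$ with $v\bo\cdot a\t$ the original product of a $1$-form with a function. Rewriting the latter by the bimodule identity $v'b=b\o(v'\ra b\t)$ valid in $\Omega$ (immediate from $v'\ra b=(Sb\o)v'b\t$), and then collapsing the two resulting $\CR$-factors using the coquasitriangularity axiom $\CR(x\tens yz)=\CR(x\o\tens z)\CR(x\t\tens y)$, comodule-coassociativity of $\Delta_R$ on $\Lambda^1$, and the antipode/counit identities, I expect both sides of the claimed relation to reduce to the single element $va\in\Omega$ (the right side because $v\ra a\t\in\Lambda^1$, so $a\o\bullet(v\ra a\t)=a\o\cdot(v\ra a\t)$ by the first computation, and $\sum a\o(v\ra a\t)=va$ again from the formula for $\ra$), establishing $\CR(v\bt\tens a\o)\,v\bo\bullet a\t=a\o\bullet(v\ra a\t)$.

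Finally, to see these relations give a presentation I would argue, as in the bosonisation/Radford--Majid theory, that the cross relation can be solved (using $\CR^{-1}$) to normal-order $v\bullet a$ into $\und A\bullet\Lambda^1$ form, so that $\und\Omega=\und A\,\Lambda$ as a vector space exactly as $\Omega=A\Lambda$. The main obstacle is the $\CR$-bookkeeping of the third step --- tracking the interplay of the $\Omega$-coproduct, the pushed-out adjoint coaction $\Ad_\pi$ and the action $\ra$, while exploiting $\pi|_{\Omega^{>0}}=0$ to kill terms --- together with the $\bullet$-derivation property of $\und\extd$ if one prefers a hands-on verification to the monoidality argument; the rest is routine assembly of~A.1 and~A.2.
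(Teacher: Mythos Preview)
Your proposal is correct and follows essentially the same route as the paper: identify $\Ad_\pi|_{\Lambda^1}$ with the right coaction $\Delta_R|_{\Lambda^1}$, compute $a\bullet v=av$ and $v\bullet a=v\bo a\t\,\CR(v\bt\tens Sa\o)$ from the transmutation formula and $\pi|_{\Omega^{>0}}=0$, observe that $\bullet|_\Lambda$ is unchanged because $\pi(y\o)\tens y\t=1\tens y$ for left-invariant $y$, and then deduce the cross relation. The only difference is cosmetic: the paper rewrites $v\bo a\t=a\t(v\bo\ra a\th)$ inside the single formula for $v\bullet a$ to obtain $v\bullet a=a\t\bullet(v\bo\ra a\th)\,\CR(v\bt,Sa\o)$ directly, and then ``rearranges'' via the convolution inverse of $\CR$; you instead verify the stated identity by reducing both sides to $va\in\Omega$. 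Your description of that step is slightly tangled (there is only one $\CR$-factor after applying the bimodule identity to $v\bullet a$; the second $\CR$-factor you mention appears only if you expand $v\bo\bullet a\t$ in the left-hand side of the stated relation), but the computation you outline is sound either way.
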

\begin{proof} The iterated coproduct on $\Lambda^1$ is
\[  (\Delta\tens{\id})\Delta v=(\Delta\tens\id)(1\tens v+v\bo\tens v\bt)=1\tens 1\tens v+1\tens v\bo\tens v\bt+v\bo\bo\tens v\bo\bt\tens v\bt\]
and hence the induced coaction on $\Lambda^1$ is
\[ \Ad_\pi(v)=v\bo\tens v\bt\]
since only terms in $A$ in both the outer positions contribute. Hence 
\[v\bullet a=v\bo a\t \CR(v\bt,Sa\o)=a\t (v\bo\ra a\th)\CR(v\bt,Sa\o)\]
while similarly
\[ a\bullet v=a\t v\t\CR((Sa\o) a\th,S\pi(v\o))=a\t v \CR((Sa\o) a\th, S1)=av.\]
Together these provide the commutation relations
\[ v\bullet a= a\t \bullet (v\bo\ra a\th)\CR(v\bt,Sa\o)\]
 between $\und A$ and $\Lambda^1$, which we rearrange as stated using the properties of $\CR$. For degree 2 we similarly find $v\bullet w=v\bo w\t\CR(v\bt,S\pi(w\o))=vw$ is unchanged. Finally, it is elementary to verify that $\extd: \und A\to \und\Omega^1$ intertwines $\Ad_R$ and $\Ad_\pi$ and hence provides a morphism in $\CM^A$, which for the purposes of the appendix we denote by $\und\extd$ (it is the same linear map once the various spaces are identified). It is elementary to verify that $\und\extd$ obeys the Leibniz rule for the transmuted products. \end{proof}

We have given here the explicit form of the relations at the 1-form level. For higher differential forms one may transmute the product of whichever $\Omega$ is used, eg the minimal `Woronowicz exterior calculus'. In the maximal prolongation exterior calculus and hence in any, one has the Maurer-Cartan equations $\extd \omega(a)+\omega(a\o)\omega(a\t)=0$ and we note that $\und\extd a=\extd a=a\o\omega(a\t)=a\o\bullet\omega(a\t)$, where now we consider $\omega: A\to \Lambda^1$. We see that the
Maurer-Cartain form is unchanged under transmutation and obeys the same equations
\[ \und\omega(a)=\und S a\o\bullet \und\extd a\t=(\und S a\o)\bullet a\t\bullet\omega(a\th)=\omega(a),\quad \und\extd\und\omega(a)+\und\omega(a\o)\bullet\und\omega(a\t)=0.\]
Having established these facts we will no longer underline the braided $\extd,\omega$.

{\bf A.4} It remains to derive calculable `R-matrix' formulae for $\und \Omega$ above. We first need to give a clean $R$-matrix presentation in our required left-basis conventions of the initial calculus $\Omega$. For $\Omega(C_q[SU_2])$ such formulae were first found by Jurco, while the systemmatic approach for any coquasitriangular Hopf algebra comes from the classification theorem due to the author\cite{Ma:dcla} using the `quantum killing form' $\CQ:A^+\to H$ where $H^+$ is a suitable dually-paired Hopf algebra to $A$.  Here $\CQ=\CR_{21}\CR\in H\tens H$ is viewed as a map by evaluation against the first factor. We assume for purposes of derivation that $H$ is quasitriangular, however the final formulae are eventually verified directly without strictly assuming this. The ad-invariance properties of the quantum killing form and the properties of $\CR$ imply that $\CQ$ intertwines the $A$-crossed module structure on $A^+$ given by  $\Ad_R$ and right multiplication with the $A$-crossed module structure on $H^+$ given by 
\[ \<\id\tens g,\Delta_Rh\>=g\o h S^{-1}g\t,\quad  \<h\ra a,b\>=\CR(b\o,a\o)\<b\t,h\>\CR(a\t,b\th)\]
for all $g\in H$, $h\in H^+$, and $a,b\in A$. For any standard quantum groups $A=C_q[G]$ associated to a complex semisimple Lie algebra, we take $H=U_q(g)$ and actually work with the composite $Q=\rho\circ \CQ:A^+\to M_n(\C)$ where $\rho$ is the matrix representation of $U_q(g)$ defied by evaluation with the matrix generators $\{t^a{}_b\}$ of $C_q[G]$. Notice that this composite map does not involve any formal powerseries and all its properties may be established directly. The canonical bicovariant calculus on $C_q[G]$ is defined by the $\Ad_R$-stable right ideal $I=\ker Q$. Also,  for generic $q$, the map $Q$ is surjective and provides an isomorphism $\Lambda^1\cong M_n(\C)$. We let $\{e_\alpha{}^\beta\}$ be the standard basis of the latter where $e_\alpha{}^\beta$ has 1 at row $\alpha$ and column $\beta$. Using the above results for $H^+$ and  one may compute that the crossed module structure on $\Lambda^1$  maps over under the $Q$ isomorphism to  
\begin{equation}\label{crossmat} \Delta_Re_\alpha{}^\beta=e_m{}^n\tens t^m{}_\alpha S t^\beta{}_n,\quad e_\alpha{}^\beta\ra t^a{}_b=e_m{}^n R^m{}_\alpha{}^a{}_c R^c{}_b{}^\beta{}_n\end{equation}
where $R^a{}_b{}^m{}_n=\CR(t^a{}_b,t^m{}_n)$ is the `R-matrix'. The differential calculus $\Omega^1(C_q[G])$ is generated by $C_q[G]$ and the $e_\alpha{}^\beta$ with bimodule relations
\[ e_\alpha{}^\beta t^a{}_b=t^a{}_ce_m{}^n R^m{}_\alpha{}^c{}_d R^d{}_b{}^\beta{}_n\]
and the above right covariance. The left covariance is defined by $e_\alpha{}^\beta$ invariant. Finally,  the Maurer-Cartan form is $\omega(a)= Q(a)$ in this description and hence
\[ \extd t^a{}_b=t^a{}_cQ(t^c{}_b-\eps(t^c{}_b))=t^a{}_c (R_{21}R)^c{}_b{}^\alpha{}_\beta e_\alpha{}^\beta-t^a{}_b\theta=[\theta,t^a{}_b],\quad \theta=e_\alpha{}^\alpha\]
using the bimodule relations. The final step is to rescale $\extd$ so as to have a $q\to 1$ limit. 

{\bf A.5} The corresponding transmuted differential calculus follows from Proposition~9 
\[ t^a{}_c\bullet e_\alpha{}^\beta\ra t^c{}_b=\CR((e_\alpha{}^\beta)\bt,t^a{}_c)(e_\alpha{}^\beta)\bo\bullet t^c{}_b\]
which  using the action and coaction (\ref{crossmat}) becomes
\[ \CR(t^m{}_\alpha St^\beta{}_n,t^a{}_c)e_m{}^n\bullet t^c{}_b=R^m{}_\alpha{}^a{}_d R^{-1}{}^\beta{}_n{}^d{}_c e_m{}^n\bullet t^c{}_b=t^a{}_c\bullet e_m{}^n R^m{}_\alpha{}^c{}_d R^d{}_b{}^\beta{}_n.\]

The $\bullet$ relations among the $t^a{}_b$ generators also has an R-matrix form with two R's on each side (subsequently called `reflection equations' by some authors). We denote the elements $t^a{}_b$ of $A=C_q[G]$ when viewed in $\und A=B_q[G]$ by $u^a{}_b$ and henceforth omit the $\bullet$ when working with these as the generators of $B_q[G]$. Their principal relations then take a compact form 
$u_2R_{21}u_1R=R_{21}u_1Ru_2$ as transmutation of the more familiar FRT relations $Rt_1t_2=t_2t_1R$ of $C_q[G]$, see\cite{Ma:book}. We similarly now obtain the relations (\ref{Rcalc}) for the bimodule relations of $\Omega(B_q[G])$ defined by transmutation. They
do not take such a compact form in terms of $R$ (hence we write them with indices) though one can put them in a compact form in terms of a different matrix obtained from $R$.  

Note that the normalisation of $R$ needs to be the `quantum group normalisation'\cite{Ma:book} where $R$ is defined as above from $\CR$. For $C_q[SU_2]$ this is $q^{-{1\over 2}}$ times $R$ in the more standard Hecke normalisation. Also, in general, summing $\alpha=\beta$ in the bimodule relations we see that $\theta u^a{}_b=u^a{}_c e_m{}^n (R_{21}R)^c{}_b{}^m{}_n$ so that $\extd u^a{}_b=[\theta,u^a{}_b]$ by the same computation as before. Hence the transmuted calculus is also inner.  We rescale $\extd$ as before in order to have a classical limit.

{\bf A.6} Although the full picture for the above results comes from transmutation, it is always possible\cite{Ma:euc} to express transmutation as a certain Drinfeld twist, which is adequate if one is only interested in the exterior algebra. Let $A$ be a Hopf algebra with bijective antipode, then $A^{\rm op}$, where we use the opposite product, is also a Hopf algebra and we let $\tilde A=A\tens A^{\rm op}$. It is easy to see that 
\begin{equation}\label{Deltatilde} \Delta_R:A^{\rm op}\to A^{\rm op}\tens A\tens A^{\rm op},\quad a\mapsto a\t\tens Sa\o\tens a\th\end{equation}
makes $A^{\rm op}$ into a right $\tilde A$-comodule algebra. Indeed,
\[ \Delta_R(a\cdot_{op}b)=\Delta_R(ba)=b\t a\t\tens (Sa\o)(S b\o)\tens b\th a\th=a\bo \cdot_{op}b\bo \tens a\bt \tilde\cdot b\bt\]
where we use the product in $\tilde A$ on the right and where we write $\Delta_R=a\bo\tens a\bt$. Now suppose that $A$ is coquasitriangular as above, so equipped with $\CR:A\tens A\to k$, and define $F:\tilde A\tens\tilde A\to k$ by
\begin{equation}\label{FR} F(a\tens b,c\tens d)=\eps(a)\CR^{-1}(b,cd),\quad  F^{-1}(a\tens b, c\tens d)=\eps(a)\CR(b,cd)\end{equation}
in terms of the original structures of $A$. Here the inverse is in the `convolution algebra' and $\CR^{-1}(a, b)=\CR(Sa, b)$. One may verify cf computations in \cite{Ma:book} that this is a dual 2-cocycle on $\tilde A$. 

In this situation the right coaction and 2-cocycle will induce on the vector space of $A^{\rm op}$ a new algebra $A_F$ with product 
\[ a\bullet b=a\bo \cdot_{op} b\bo \,  F^{-1}(a\bt\tens b\bt)=b\t a\t F^{-1}(Sa\o\tens a\th,Sb\o\tens b\th).\]
Moreover, $A_F$ will be a right comodule algebra under a Drinfeld cotwist Hopf algebra $\tilde A^F$, where the latter has a new product
\[ (a\tens b)\cdot_F(c\tens d)=F(a\o\tens b\o,c\o\tens d\o)(a\t\tens b\t)\tilde\cdot (c\t\tens d\t)F^{-1}(a\th\tens b\th,c\th\tens d\th).\]
Note that Drinfeld\cite{Dri:qua} discussed only the twisting of (quasi)-Hopf algebras. The induced twisting and cotwisting of covariant algebras goes back to work of the author based on a categorical point of view. As a result of these cotwists one finds
\[ A^{op}_F=\underline A,\quad \tilde A^F=A\bowtie_\CR A\]
where $\underline A$ is the transmutation as in A.2 above with $\pi=\id$, and $A\bowtie_\CR A$ is a certain double cross product `complexification' factorising into the two copies of $A$ as sub-Hopf algebras, see \cite{Ma:book}. It has product
\[ (a\tens b)(c\tens d)=\CR^{-1}(b\o,c\o)ac\t\tens b\t d\CR(b\th,c\th)\]
and a tensor coproduct structure, and surjects as a Hopf algebra onto $A$ by product of the factors. Its coaction on $\underline{A}$ is the same linear map as the coaction $\Delta_R$ of $\tilde A$. This outlines the results in \cite{Ma:euc} in the conventions we need. 

{\bf A.7} Suppose that $\Omega(A)$ is a bicovariant differential calculus on a Hopf algebra $A$ with bijective antipode. In this case $\Omega(A)^{\rm op}$ with reversed products and unchanged $\extd$ is an exterior algebra on the Hopf algebra $A^{\rm op}$ covariant under the right coaction (\ref{Deltatilde}) of $\tilde A$. Thus
\[ \Delta_R(a\cdot_{op}\extd b)=\Delta_R((\extd b)a)=(\extd b\t)a\t\tens S(b\o a\o)\tens b\th a\th=a\bo\cdot_{op}\extd b\bo\tens a\bt\tilde\cdot b\bt \]
is well-defined (and then necessarily has the required properties). The coaction is the same linear map as the coaction $((\id\tens S)\Delta_L\tens\id)\Delta_R$ of $\tilde A$ on $\Omega^1(A)$ in terms of left and right coations $\Delta_{L,R}$ of $A$. Observe that $\Omega(A^{\rm op}):=\Omega(A)^{\rm op}$ becomes a $\tilde A$-comodule algebra and that the Maurer-Cartan form for $A^{\rm op}$ is
\[ \omega_{op}(a)=S^{-1}a\o\cdot_{op}\extd a\t=(\extd a\t)S^{-1}a\o= -a\t\extd S^{-1}a\o=-\omega(S^{-1}a)\]
so the space of left-invariant 1-forms remains $\Lambda^1$. Note also that the coaction of $\tilde A$ on $v\in\Lambda^1$ is
\begin{equation}\label{Deltatildev}\Delta_R v= v\bo\tens 1\tens v\bt\end{equation}
corresponding to invariance under $\Delta_L$ and with coaction $v\bo\tens v\bt$ of $A$ from the right. We recall that $\Lambda^1$ is also a right module under $A$ and we have denoted this action as $\ra$. These two structures form a right crossed module which we use in the form $\Delta(v\ra a)=v\bo\ra a\t\tens (Sa\o)v\bt a\th$. 

We now apply the induced cotwisting of covariant algebras to $\Omega(A^{\rm op})$ when $A$ is coquasitriangular. This will induce a new product on the exterior algebra. We recall that any coquasitriangular Hopf algebra has a functional $\cv$ implementing the square of the antipode\cite{Ma:bg,Ma:book}. Explicitly, $\cv^{-1}(a)=\CR(S^2a\o,a\t)$. 

\begin{propos} Let $(A,\CR)$ be a coquasitriangular Hopf algebra, $F$ the cocycle (\ref{FR}) and $\Omega(A)$ a bicovariant calculus. The cocycle cotwist $\Omega(A^{\rm op})_F$ provides an exterior algebra on $A_F=\underline{A}$ isomorphic to $\Omega(\underline{A})$ in Proposition~9. It is generated by $\und A,\Lambda^1$, has bimodule relations 
\[ v\bo\bullet a\t\CR(Sv\bt,(Sa\o)a\th)=a\o\bullet (v\bo\ra S^{-1}a\fiv)\cv^{-1}(a\t)\CR(a\th,Sa\six)\CR(a\fo,Sv\bt)\]
and is covariant under $A\bowtie_\CR A$.   \end{propos}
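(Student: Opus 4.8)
The plan is to treat this as an instance of the general cotwisting of covariant (comodule) algebras recalled in A.6--A.7, now applied to the graded comodule algebra $\Omega(A^{\rm op})=\Omega(A)^{\rm op}$ in place of its degree-zero part $A^{\rm op}$ alone. The first step is to note that $\Omega(A^{\rm op})$ is a right $\tilde A$-comodule algebra under the coaction which on $\Omega^0$ is (\ref{Deltatilde}) and on the left-invariant forms is (\ref{Deltatildev}), and that $F$ of (\ref{FR}) is a dual $2$-cocycle on $\tilde A$. The general theory then produces on the same underlying $\Z_2$-graded vector space the new associative product
\[ \phi\bullet\psi=\phi\bo\cdot_{op}\psi\bo\, F^{-1}(\phi\bt\tens\psi\bt),\]
making $\Omega(A^{\rm op})_F$ a comodule algebra under the cotwisted Hopf algebra $\tilde A^F=A\bowtie_\CR A=C_q[G_\C]$. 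Restricting to degree $0$ recovers $A^{\rm op}_F=\und A$ exactly as in A.6, so $\Omega(A^{\rm op})_F$ is a graded algebra over $\und A$, covariant under $C_q[G_\C]$, as claimed. Since $\extd$ is unchanged as a linear map, is of degree $+1$, and (being a covariant calculus) intertwines the $\tilde A$-coactions, applying $\extd$ to $\phi\bo\cdot_{op}\psi\bo$ and using that $\extd$ commutes with the coaction immediately gives the graded Leibniz rule $\extd(\phi\bullet\psi)=(\extd\phi)\bullet\psi+(-1)^{|\phi|}\phi\bullet(\extd\psi)$; hence $\Omega(A^{\rm op})_F$ is a differential graded algebra on $\und A$.

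The second step is to extract the explicit bimodule relations. For $v\in\Lambda^1$ and $a\in\und A$ I would write out $v\bullet a$ and $a\bullet v$ from the cotwist formula, substituting $\Delta_R a=a\t\tens Sa\o\tens a\th$ from (\ref{Deltatilde}), $\Delta_R v=v\bo\tens 1\tens v\bt$ from (\ref{Deltatildev}) and the values $F^{\mp1}(b\tens c,\,b'\tens c')=\eps(b)\CR^{\pm1}(c,b'c')$ from (\ref{FR}); the $F$-factors then collapse to $\CR$-pairings of the $A^{\rm op}$-legs against products of $A$-legs. One next rewrites the $\cdot_{op}$-products of a $\Lambda^1$-element with an element of $\und A$ using the $\Omega(A)$-bimodule relation $va=a\o(v\ra a\t)$, and re-orders all the $\CR$-factors with the coquasitriangularity axioms together with $\CR(Sa,b)=\CR^{-1}(a,b)$ and $\CR(a,Sb)=\CR(S^{-1}a,b)$. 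The functional $\cv$ with $\cv^{-1}(a)=\CR(S^2a\o,a\t)$ enters precisely because $A^{\rm op}$ carries antipode $S^{-1}$, so re-expressing the opposite product and the Maurer--Cartan form $\omega_{op}(a)=-\omega(S^{-1}a)$ in terms of the structures of $A$ manufactures factors of $S^2$ that are absorbed into $\cv^{-1}$. The outcome is the stated relation
\[ v\bo\bullet a\t\,\CR(Sv\bt,(Sa\o)a\th)=a\o\bullet (v\bo\ra S^{-1}a\fiv)\,\cv^{-1}(a\t)\,\CR(a\th,Sa\six)\,\CR(a\fo,Sv\bt),\]
with the relations among the $\Lambda^1$-generators carried over from $\Omega(A)^{\rm op}$ after the same $\cv$-adjusted identification of $\Lambda^1$. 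Comparing this presentation with Proposition~9 -- equivalently, invoking that A.6 exhibits transmutation as exactly this cotwist, so the two constructions must agree once the degree-$0$ parts are identified via $A^{\rm op}_F=\und A$ and $\Lambda^1$ via the $\cv$-twist -- gives the asserted isomorphism $\Omega(A^{\rm op})_F\cong\Omega(\und A)$.

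I expect the main obstacle to be this last re-ordering: tracking the (here sixfold) Sweedler decomposition of $a$ together with the interlaced $S$, $S^{-1}$ and $S^2$, and forcing every $\CR$-factor into the precise compact form above, requires careful repeated use of the hexagon/bialgebra identities for $\CR$, and there is a genuine bookkeeping risk, especially in pinning the placement of $\cv^{-1}$ and of the $S^{-1}$ on the innermost leg. By contrast, associativity, well-definedness of the bimodule, and the existence of the $A\bowtie_\CR A$-coaction present no difficulty, being automatic from the cocycle property of $F$; and one has the cross-check that specialising $(A,\CR)=(C_q[SU_2],\CR)$ must reproduce the calculus of Proposition~5 and, after localisation, the Corollary, which fixes any residual normalisation freedom.
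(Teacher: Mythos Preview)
Your approach is essentially that of the paper: apply the general comodule-algebra cotwist to $\Omega(A^{\rm op})$ under the $\tilde A$-coaction, compute $v\bullet a$ and $a\bullet v$ from the cocycle formula using (\ref{Deltatilde})--(\ref{Deltatildev}), and then feed in the $\Omega(A)$ bimodule relation. Two points are worth flagging.

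First, the paper streamlines the step you flag as the main obstacle by observing that the raw cotwist formulae collapse to $a\bullet v=v\bo a\o\,\CR(a\t,v\bt)$ and $v\bullet a=a\t v\bo\,\CR(v\bt,(Sa\o)a\th)$, i.e.\ in each case precisely a braiding in $\CM^A$ (for the right regular and right adjoint coactions respectively) followed by the \emph{original} product in $\Omega(A)$. This makes inversion of the $\bullet$-products immediate and reduces the sixfold Sweedler bookkeeping to a single substitution of $av=(v\ra S^{-1}a\t)a\o$, rather than the repeated hexagon manipulations you anticipate.

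Second, and more substantively, your argument for the isomorphism with Proposition~9 is too light. Appealing to A.6 (``transmutation equals cotwist'') settles only degree~$0$; in degree~$1$ the two $\bullet$-products on $\Lambda^1$ are genuinely different ($v\bullet w=vw$ in Proposition~9 versus $v\bullet w=w\bo v\bo\,\CR(v\bt,w\bt)$ under the cotwist), so ``comparing presentations'' does not suffice and a nontrivial map $\Theta$ is required. The paper constructs it concretely: since both calculi share the same $\extd$ as a linear map and the same degree-$0$ product on $\und A$, one is forced to set $\Theta(\extd a)=\extd a$ and hence $\Theta(a\bullet\extd b)=a\bullet\extd b$, the right-hand side computed in either algebra. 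By construction $\Theta$ is a left-module map; that it is a right-module map follows from the Leibniz rule together with the coincidence of the degree-$0$ products. Your ``$\cv$-twist'' intuition is correct --- $\Theta|_{\Lambda^1}$ is not the identity and does carry the $\cv^{-1}$ factor --- but this emerges as a \emph{consequence} of the above definition of $\Theta$, not as an input one has to guess in advance.
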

\proof We first compute the products in $\Omega(A^{\rm op})_F$,
\[ a\bullet v=a\t\cdot_{op}v\bo F^{-1}(Sa\o\tens a\th,1\tens v\bt)=v\bo a\o\CR(a\t,v\bt)\]
\[ v\bullet a=v\bo\cdot_{op}a\t F^{-1}(1\tens v\bt,Sa\o\tens a\th)=a\t v\bo\CR(v\bt,(Sa\o)a\th).\]
Both expressions on the right have the form of a braiding in the category $\CM^A$ of right $A$-comodules (for the right regular and right adjoint coactions of $A$ on itself respectively) followed by the original product in $\Omega(A)$. As a result they can both be inverted as
\[  v\bo\bullet a\t \CR^{-1}(v\bt,(Sa\o)a\th)=av,\quad a\o\bullet v\bo\CR^{-1}(a\t,v\bt)=va.\]
The relation $av=(v\ra S^{-1}a\t)a\o$ in $\Omega(A)$ gives the bimodule relation as stated. One may also compute $v\bullet w=w\bo v\bo \CR(v\bt,w\bt)$ again given by the braiding in $\CM^A$ followed by the original product. It remains to prove that this differential calculus is isomorphic to $\Omega(\und A)$.  We outline the proof for the bimodule relations; the general case is similar. First, suppose that $\Theta:\Omega^1(\und{A})\to \Omega^1(A^{\rm op})_F$ is an isomorphism. This means a bimodule map forming a commuting triangle with the respective exterior derivatives. However, in both cases $\extd$ as a linear map is unchanged, so $\Theta(\extd a)=\extd a$. Hence $\Theta(a \bullet\extd b)=a\bullet\extd b=(\extd b\t)a\o\CR(a\t,(Sb\o)b\th)$  for the respective deformed products and the result in terms of $\Omega(A)$. We now take this as a definition of $\Theta$. By construction  it is a left-module map and it is a right module map by the Leibniz rule and the fact that the $\bullet$ products coincide on $A$ as the product of $\und A$. This can also be verified directly from the form of $\Theta$ and repeated use of the properties of a coquasitriangular Hopf algebra. \endproof 
 
In particular, the braided Maurer-Cartan form in $\Omega(\und{A})$ maps over to 
\[ \Theta(\omega(a))=\Theta(\und{S}a\o\extd a\t)=\und{S}a\o\bullet \extd a\t=-\cv^{-1}(a\o)\CR(a\t,Sa\fo)\omega(S^{-1}a\th)\] after some computation, so $\Theta$ is not the identity map when restricted to $\Lambda^1$. We  view the right hand side as a cotwisted version of $\omega_{op}$ and note that $\extd a=a\o\bullet\Theta(\omega(a\t))$ in $\Omega(A^{\rm op})_F$. 

\begin{corollary} The calculus $\Omega(\und{A})$ is covariant under $\tilde A=A\tens A^{\rm op}$ with coactions (\ref{Deltatilde}) and (\ref{Deltatildev}) on $A,\Lambda^1\subset \Omega(\und A)$.\end{corollary}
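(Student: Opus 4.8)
The plan is to read the statement off Proposition~10, via one structural remark: the Hopf algebra $\tilde A=A\tens A^{\rm op}$ and its Drinfeld cotwist $\tilde A^F=A\bowtie_\CR A$ have the \emph{same underlying coalgebra}, namely the tensor--product coalgebra on $A\tens A$, because cotwisting by $F$ changes only the product and leaves $\Delta$ and $\eps$ untouched. Hence the $\tilde A^F$-comodule structure on $\Omega(\und A)$ produced by Proposition~10 is, through the identity map on $A\tens A$, a $\tilde A$-comodule structure with the same structure map, and the comodule axioms transfer at no cost since they refer only to the coalgebra of the acting Hopf algebra. All that is left is to make the coaction explicit on the generators and to check that it respects the remaining differential-calculus data.

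First I would record, from A.6 and A.7, that the coactions in question are the pre-twist ones. The cotwist leaves the coaction \emph{linear map} unchanged, so the coaction of $\tilde A^F=A\bowtie_\CR A$ on $A_F=\und A$ is exactly $\Delta_R$ of $\tilde A$ on $A^{\rm op}$, i.e.\ (\ref{Deltatilde}), and the coaction on $\Lambda^1\subset\Omega(\und A)$ is (\ref{Deltatildev}). Transporting along the isomorphism $\Omega(A^{\rm op})_F\cong\Omega(\und A)$ of Proposition~10, the covariance proven there is therefore implemented on the generators $\und A$ and $\Lambda^1$ of $\Omega(\und A)$ by the formulas (\ref{Deltatilde}), (\ref{Deltatildev}). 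Well-definedness of the resulting map on the relations of $\Omega(\und A)$ -- the reflection-equation relations among the $u^a{}_b$ and the bimodule relations of Proposition~10 -- is inherited: those relations are the $F$-cotwists of the relations of $\Omega(A^{\rm op})$, and the latter are covariant under $\tilde A$ precisely via the coactions (\ref{Deltatilde}), (\ref{Deltatildev}) (this being what bicovariance of $\Omega(A)$ becomes for $\Omega(A^{\rm op})$ after passage to the opposite algebra, as explained in A.7).

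Next comes compatibility with the calculus data. Equivariance of $\extd$ is automatic: as a linear map $\extd$ is untouched by both transmutation (Proposition~9) and the cotwist, and it already intertwined the coactions on $\Omega^0=A^{\rm op}$ and on $\Omega^1$; the (unchanged) braided Maurer--Cartan form $\omega$ and the identity expressing $\extd$ on generators in terms of $\omega$ confirm the compatibility on generators. The same argument gives equivariance of the wedge product and of the $\und A$-bimodule structure on $\Omega^1(\und A)$: these were equivariant before the cotwist, and the cotwist only deforms the products by $F$, which is itself built from the coaction and so cannot break equivariance.

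The step I expect to be the real point -- more careful bookkeeping than hard analysis -- is to keep straight which products and coproducts are the cotwisted ones and to be precise about the sense of ``covariant under $\tilde A$''. Because $\tilde A$ and $\tilde A^F$ differ as \emph{algebras}, one cannot simply quote Proposition~10 for the comodule-algebra compatibility; one must observe that the covariance asserted here is the one inherited through the common coalgebra, and verify that the $\CR$-factors distinguishing the product of $A\bowtie_\CR A$ from the componentwise product of $A\tens A^{\rm op}$ contribute nothing on the image of the coaction. This uses the special shape of $F$ (the $\eps$ in its first leg, see (\ref{FR})) together with the fact that the first, $A$-valued, leg of (\ref{Deltatildev}) is trivial and that of (\ref{Deltatilde}) is of left-coaction type. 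Granting this, the Corollary follows; as a consistency check one may note that composing the coaction with the Hopf surjection $A\bowtie_\CR A\to A$ (product of the factors) recovers the adjoint coaction $\Ad_\pi$ that makes $\und A$ a braided group in $\CM^A$, in agreement with A.2.
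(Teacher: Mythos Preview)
There is a genuine gap in your argument. You correctly observe that $\tilde A$ and $\tilde A^F$ share a coalgebra, so the $\tilde A^F$-covariance of $\Omega(A^{\rm op})_F$ produced by Proposition~10 is automatically a $\tilde A$-comodule structure with the same structure map. The missing step is the transport along $\Theta$. The isomorphism $\Theta:\Omega(\und A)\to\Omega(A^{\rm op})_F$ of Proposition~10 is the identity on degree~$0$, but as computed in the paper immediately before the Corollary it is \emph{not} the identity on $\Lambda^1$: explicitly $\Theta(\omega(a))=-\cv^{-1}(a\o)\CR(a\t,Sa\fo)\omega(S^{-1}a\th)$. Consequently the transported coaction on $\Lambda^1\subset\Omega(\und A)$ is $(\Theta^{-1}\tens\id)\circ\Delta_R\circ\Theta$, and there is no a~priori reason this equals the linear map (\ref{Deltatildev}). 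Your sentence ``the covariance proven there is therefore implemented on the generators $\und A$ and $\Lambda^1$ of $\Omega(\und A)$ by the formulas (\ref{Deltatilde}), (\ref{Deltatildev})'' asserts exactly this equality without justification.

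The paper's proof consists almost entirely of this missing verification: it checks by direct computation that $(\Theta\tens\id)\Delta_R\,\omega(a)=\Delta_R\,\Theta(\omega(a))$, i.e.\ that $\Theta|_{\Lambda^1}$ intertwines the coaction (\ref{Deltatildev}), and then notes that $\Theta|_{\Lambda^1}$ is a linear isomorphism (giving its inverse explicitly). Your final paragraph, about reconciling the differing \emph{products} of $\tilde A$ and $\tilde A^F$ on the image of the coaction, is beside the point: the subtlety here is not in how those algebras differ, but in the fact that the two copies of $\Lambda^1$ sitting inside $\Omega(\und A)$ and $\Omega(A^{\rm op})_F$ are identified by a nontrivial linear automorphism, so that ``same formula for the coaction on $\Lambda^1$'' is something that must be proven rather than assumed.
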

\proof We check that $\Theta$ on $\Lambda^1$, although not the identity, commutes with the coaction (\ref{Deltatildev}). Indeed, $(\Theta\tens\id)\Delta_R \omega(a)=\Delta_R\Theta(\omega(a))$ since both sides compute as
\[ -\cv^{-1}(a\o)\CR(a\th, Sa\fiv)\omega(S^{-1}a\fo)\tens 1\tens (S^{-1}a\t)a\six\]
using the definitions of the various maps and identities in \cite{Ma:book} for coquasitriangular Hopf algebras. It is also clear from general principles that $\Theta:\Lambda^1\to \Lambda^1$ is an isomorphism of linear maps. Indeed, its inverse is $\Theta^{-1}(\omega(a))=-\omega(Sa\t)\cu(a\th)\CR(a\fo,a\o)$ where $\cu(a)=\CR(a\t,Sa\o)$ also implements the square of the antipode\cite{Ma:bg,Ma:book}. \endproof

Note that these coactions, when pushed out along the product map $A\bowtie_\CR A\to A$ become our original coactions by which $\Omega(\und A)$ was an object in the braided category $\CM^A$ in Proposition~9; we have succeeded in lifting them to coactons of $A\bowtie_\CR A$, which is also a coquasitriangular Hopf algebra. For $A$ factorisable, which is essentially the case for the standard quantum groups $C_q[G]$,  $C_q[G]\bowtie_R C_q[G]$ is essentially a version of its quantum double, but in a form which is a deformation of a commutative Hopf algebra and hence which can be viewed as $C_q[G_\C]$, where $G_\C$ is the complexification of $G$ as a real Lie group, see \cite{Ma:book}.


\begin{thebibliography}{99}
 
\bibitem{MaSch}
S. Majid and B. Schroers,
q-Deformation and semidualisation in 3D quantum gravity,  J. Phys A 42 (2009) 425402 (40pp).

\bibitem{Pod} 
P. Podle\'s,  Quantum spheres. Lett. Math. Phys., 14 (1987) 193Ð202. 


\bibitem{Ma:rieq}
S. Majid, Noncommutative Riemannian and spin geometry of the standard $q$-sphere, Commun. Math. Phys. 256 (2005) 255--285.


\bibitem{BatMa:non}
E. Batista and S. Majid, Noncommutative geometry of angular momentum space $U(su_2)$, J. Math. Phys. 44 (2003) 107--137.

\bibitem{GMS}
H. Grosse, J. Madore and H. Steinacker, Field theory on the $q$-deformed fuzzy sphere I, J. Geom. Phys. 38 (2001) 308--342.

\bibitem{Ma:book} 
S. Majid, Foundations of Quantum Group Theory Cambridge University Press, (1995) 609 pp. \& paperback  (2000)  640 pp.


\bibitem{BraMa:qua}
S. Brain and S. Majid, Quantisation of twistor theory by cocycle twist, Commun. Math. Phys. 284 (2008) 713--774.


\bibitem{HajMa:pro}
P. Hajac and S. Majid, Projective module description of the q-monopole, Commun. Math. Phys. 206 (1999) 246--264.


\bibitem{BrzMa:geo}
T. Brzezinski and S. Majid, Quantum geometry of algebra factorisations and coalgebra bundles, Commun. Math. Phys. 213 (2000) 491--521.

\bibitem{NM} 
M. Noumi and K. Mimachi, Quantum 2-spheres and big $q$-Jacobi polynomials,
Comm. Math. Phys. 128 (1990), 521-531.

\bibitem{DK}
M.S. Dijkhuizen and T.H. Koornwinder,
Quantum homogeneous spaces, duality, and quantum 2-spheres, Geom. Dedicata 52 (1994), 291-315.


\bibitem{Wor:dif}
S.L. Woronowicz, Differential calculus on compact matrix pseudogroups (quantum groups). 
Commun. Math. Phys. 122 (1989) 125--170.


\bibitem{Pod3}
P. Podle\'s,  The classification of differentical structures on quantum 2-spheres, Commun. Math. 
Phys. 150 (1992) 167--179.


\bibitem{Ma:euc}
S. Majid, $q$-Euclidean space and quantum Wick rotation by twisting, J. Math. Phys. 35 (1994) 5025--5033.

\bibitem{MaRue}
S. Majid and H. Ruegg, Bicrossproduct structure of the $\kappa$-Poincare group and noncommutative geometry, Phys. Lett. B. 334 (1994) 348-354.

\bibitem{Ma:pla}
S. Majid, Hopf algebras for physics at the Planck scale, J. Classical and Quantum Gravity 5 (1988) 1587-1607.

\bibitem{Sitarz}
A. Sitarz, Noncommutative differential calculus on the $\kappa$-Minkowski space, Phys.Lett. B349 (1995) 42-48.

\bibitem{Ma:ric}
S. Majid, Noncommutative Ricci curvature and Dirac operator on Cq[SL2] at roots of unity, Lett. Math. Phys. 63 (2003) 39-54.



\bibitem{Ma:time}
S. Majid, Noncommutative model with spontaneous time generation and Planckian bound, J. Math. Phys. 46 (2005) 103520, 18 pp.

\bibitem{FreMa}
L. Freidel and S. Majid, Noncommutative harmonic analysis, sampling theory and the Duflo map in 2+1 quantum gravity, Class. Quant. Gravity 25 (2008) 045006 (37pp).

\bibitem{AmeMa}
G. Amelino-Camelia and S. Majid, Waves on noncommutative spacetime and gamma-ray bursts, Int. J. Mod. Phys. A15 (2000) 4301-4323.

\bibitem{Ma:bg}
S. Majid, Braided groups, J. Pure Applied Algebra 86 (1993) 187--221.

\bibitem{Ma:dcla}
S. Majid, Classification of bicovariant differential calculi, J. Geom. Phys. 25 (1998) 119--140.

\bibitem{Dri:qua}
V.G. Drinfeld, Quasi-Hopf algebras, Leningrad Math. J. 1 (1990) 1419Ð1457.

\end{thebibliography}
\end{document}